\newcommand{\eqdef}{\coloneqq}
\newcommand{\bC}{\mathbb{C}}
\newcommand{\bR}{\mathbb{R}}
\newcommand{\bZ}{\mathbb{Z}}
\newcommand{\al}{\alpha}
\newcommand{\la}{\lambda}
\newcommand{\tht}{\vartheta}
\renewcommand{\Re}{\operatorname{Re}}
\newcommand{\arth}{\operatorname{arth}}
\newcommand{\arcth}{\operatorname{arccth}}
\newcommand{\arctanh}{\operatorname{arctanh}}
\newcommand{\medstrut}{\vphantom{\int_0^1}}
\newcommand{\bigstrut}{\vphantom{\int_{0_0}^{1^1}}}
\newcommand{\hstrut}{\mbox{}\ \mbox{}}
\newcommand{\laasympt}{\lambda^{\operatorname{asympt}}}
\newcommand{\mydoi}[1]{\href{https://doi.org/#1}{doi:#1}.}
\newtheorem{theorem}{Theorem}
\newtheorem{proposition}{Proposition}
\newtheorem{lemma}{Lemma}
\theoremstyle{remark}
\newtheorem{remark}{Remark}
\begin{document}

\title{Eigenvalues of  tridiagonal Hermitian Toeplitz matrices\\ with perturbations in the off-diagonal corners}

\author{Sergei M. Grudsky,
Egor A. Maximenko,
Alejandro Soto-Gonz\'{a}lez}

\maketitle

\begin{abstract}
In this paper we study the eigenvalues
of Hermitian Toeplitz matrices 
with the entries $2,-1,0,\ldots,0,-\alpha$ in the first column.
Notice that the generating symbol depends on the order $n$ of the matrix.
If $|\alpha|\le 1$, then the eigenvalues belong to $[0,4]$ and are asymptotically distributed as the function $g(x)=4\sin^2(x/2)$ on $[0,\pi]$.
The situation changes drastically
when $|\alpha|>1$ and $n$ tends to infinity.
Then the two extreme eigenvalues (the minimal and the maximal one) lay out of $[0,4]$ and converge rapidly to certain limits determined by the value of $\alpha$, whilst all others belong to $[0,4]$
and are asymptotically distributed as $g$.
In all cases, we transform the characteristic equation to a form convenient to solve by numerical methods, and derive asymptotic formulas for the eigenvalues.

\medskip\noindent
\textbf{Keywords:} eigenvalue, tridiagonal matrix, Toeplitz matrix, perturbation, asymptotic expansion.

\medskip\noindent
\textbf{MSC (2010):}
15B05, 15A18, 41A60, 65F15, 47A55.
% 15B05 = Toeplitz, Cauchy, and related matrices
% 15A18 = Eigenvalues, singular values, and eigenvectors
% 41A60 = Asymptotic approximations, asymptotic expansions (steepest descent, etc.)
% 65F15 = Numerical computation of eigenvalues and eigenvectors of matrices
% 47A55 = Perturbation theory of linear operators
% 47B35 = Toeplitz operators, Hankel operators, Wiener-Hopf operators

\end{abstract}

\section{Introduction}

Toeplitz matrices appear naturally in the study
of shift-invariant models with zero boundary conditions.
The general theory of such matrices is explained
in the books and reviews~\cite{BFGM2015,BG2005,BS1999,DIK2013,G2005,GS1958}.
Efficient formulas for the determinants
of banded symmetric Toeplitz matrices were found in \cite{Trench1987symm,Elouafi2014}.
The determinants, minors, cofactors, and eigenvectors of banded Toeplitz matrices
were recently expressed in terms
of skew Schur polynomials, see~\cite{A2012,MM2017}.
The individual behavior of the eigenvalues
of Hermitean Toeplitz matrices was investigated in~\cite{BBGM2018,BGM2017,BBGM2015,BBGM2017,DIK2012}.

Determinants of non-singular Toeplitz matrices
with low-rank perturbations were studied in~\cite{BFGM2015}.
The eigenvalues and eigenvectors of tridiagonal Toeplitz matrices
with some special perturbations
on the diagonal corners are computed in~\cite[Section~1.1]{BYR2006} and~\cite{NR2019}.
The determinants and inverses of a family
of non-symmetric tridiagonal Toeplitz matrices
with perturbed corners are computed
in~\cite{WJJS2019}.

Yueh and Cheng~\cite{YuehCheng2008} considered the tridiagonal Toeplitz matrices with four perturbed corners. Using the techniques of finite differences they derived the characteristic equation in a trigonometric form and formulas for the eigenvectors, in terms of the eigenvalues.
For some special values of the parameters, they computed explicitly the eigenvalues.
Unlike the present paper, \cite{YuehCheng2008} deals with arbitrary complex coefficients
and does not contain the analysis of the localization of the eigenvalues
nor approximate formulas for the eigenvalues.

In this paper we study the spectral behavior of one family of Hermitean tridiagonal Toeplitz matrices
with perturbations at the entries $(n,1)$ and $(1,n)$,
where $n$ is the order of the matrix.
For every $\al$ in $\bC$
and every natural number $n\ge 3$
we denote by $A_{\al,n}$
the $n\times n$ Toeplitz matrix
generated by the following function
which depends on the parameters $\al$ and $n$:
\begin{equation}\label{eq:genalpha}
-\overline{\al} t^{-n+1}-t^{-1}+2-t-\al t^{n-1}.
\end{equation}
After the change of variable $t=\exp(ix)$, the generating symbol results in
\[
h_{\al,n}(x)
=4\sin^2(x/2)-2\Re(\al\,e^{i(n-1)x}).
\]
For example, if $n=6$,
\[
A_{\al,6}=
\begin{bmatrix*}[r]
2 & -1 & 0 & 0 & 0 & -\overline{\al} \\
-1 & 2 & -1 & 0 & 0 & 0 \\
0 & -1 & 2 & -1 & 0 & 0 \\
0 & 0 & -1 & 2 & -1 & 0 \\
0 & 0 & 0 & -1 & 2 & -1 \\
-\al & 0 & 0 & 0 & -1 & 2
\end{bmatrix*}.
\]
Such matrices may appear in the study of one-dimensional
shift-invariant models on a finite interval,
with some special interactions
between the extremes of the interval.

For $\al=0$, the matrix $A_{\al,n}$
is the well studied tridiagonal Toeplitz matrix with the symbol
$g\eqdef h_{0,n}$, i.e.
\begin{equation}\label{eq:generating_function} 
g(x)\eqdef 4\sin^2(x/2).
\end{equation}
The characteristic polynomial of $A_{0,n}$
is $\det(\la I_n-A_{0,n})=U_n((\la-2)/2)$,
where $U_n$ is the $n$th Chebyshev polynomial
of the second type,
and the eigenvalues of $A_{0,n}$
are $g(j\pi/(n+1))$, $1\le j\le n$.
For general $\al$,
the characteristic polynomial of $A_{\al,n}$
can be expressed in terms of $U_n$ and $U_{n-2}$.
We are able to compute the eigenvalues of $A_{\al,n}$ explicitly only for $|\al|=1$ or $\al=0$.
For $|\al|\ne 1$, applying an appropriate trigonometric or hyperbolic change of variable,
we describe the localization of the eigenvalues,
transform the characteristic equation to a form that can be solved by the fixed point iteration, and get asymptotic formulas.

It turns out that the cases $|\al|<1$ (``weak perturbations'')
and $|\al|>1$ (``strong perturbations'') are essentially different:
if $|\al|>1$, then the extreme eigenvalues go outside the interval $[0,4]$ and need a special treatment.
Below we present the corresponding results separately, starting with the simpler case $|\al|<1$.
In Sections~\ref{sec:proofs_weak}
and~\ref{sec:proofs_strong} we give the corresponding proofs.
The case $|\al|=1$ can be viewed
as a limit of the case $|\al|<1$,
but the results in this case are much simpler,
see Section~\ref{sec:abs_alpha_eq_one}.
Finally, in Section~\ref{sec:numerical}
we discuss some numerical tests.

\section{Main results}
\label{sec:main_results}

The matrices $A_{\al,n}$ are Hermitean,
their eigenvalues are real,
and we enumerate them in the ascending order:
\[
\la_{\al,n,1}
\leq\la_{\al,n,2}
\leq\cdots
\leq\la_{\al,n,n}.
\]

\subsection{Main results for weak perturbations}

Recall that the function $g$
is defined by~\eqref{eq:generating_function}.
It strictly increases on $[0,\pi]$
taking values from $0$ to $4$.

\begin{theorem}[localization of the eigenvalues for weak perturbations]
\label{thm:weak_localization}
Let $\al\in\bC$, $|\al|\le 1$, $\al\notin\{-1,1\}$, and $n\ge 3$.
Then the matrix $A_{\al,n}$ has $n$ different eigenvalues belonging to $(0,4)$.
More precisely, for every $j$ in $\{1,\ldots,n\}$,
\begin{equation}\label{eq:weak_localization}
g\left(\frac{(j-1)\pi}{n}\right)
<\la_{\al,n,j}
<g\left(\frac{j\pi}{n}\right).
\end{equation}
\end{theorem}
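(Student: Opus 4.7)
\medskip
\noindent\textbf{Proof plan.}
Write $p_n(\lambda)\eqdef\det(\lambda I-A_{\al,n})$. The strategy is to put $p_n$ into closed form, evaluate $p_n\circ g$ on the grid $\theta=j\pi/n$ with $j=0,1,\ldots,n$, and conclude by the intermediate value theorem plus a degree count.

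First I regard $A_{\al,n}=A_{0,n}-\bar{\al}\,e_1e_n^{T}-\al\,e_ne_1^{T}$ as a rank-two perturbation of the unperturbed tridiagonal Toeplitz matrix and apply the Sylvester determinant identity, which reduces the quotient $p_n(\lambda)/\det(\lambda I-A_{0,n})$ to a $2\times2$ determinant in the resolvent entries $(\lambda I-A_{0,n})^{-1}_{1,1}$ and $(\lambda I-A_{0,n})^{-1}_{1,n}$. These are the standard Chebyshev ratios $U_{n-1}/U_n$ and $1/U_n$ (evaluated at $(\lambda-2)/2$); collapsing $1-U_{n-1}^{2}$ via the Tur\'{a}n identity $U_{n-1}^{2}-U_nU_{n-2}=1$ produces the polynomial identity
\[
p_n(\lambda)=U_n\!\bigl(\tfrac{\lambda-2}{2}\bigr)
-|\al|^{2}\,U_{n-2}\!\bigl(\tfrac{\lambda-2}{2}\bigr)+2\Re(\al).
\]
The same formula can alternatively be obtained by direct cofactor expansion along the first row and column together with the three-term recurrence for $U_k$.

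Next I parametrize $\lambda=g(\theta)=2-2\cos\theta$ with $\theta\in[0,\pi]$ and use $U_k(-\cos\theta)=(-1)^k\sin((k+1)\theta)/\sin\theta$ on $(0,\pi)$. A short simplification gives
\[
p_n(g(j\pi/n))=(-1)^{n+j}(1+|\al|^{2})+2\Re(\al),\qquad 1\le j\le n-1,
\]
and from $U_k(\pm1)=(\pm1)^k(k+1)$ the endpoints evaluate to
\[
p_n(0)=(-1)^n\bigl[(n+1)-|\al|^{2}(n-1)\bigr]+2\Re(\al),\qquad
p_n(4)=(n+1)-|\al|^{2}(n-1)+2\Re(\al).
\]
The hypothesis $|\al|\le1$, $\al\notin\{-1,1\}$ gives $2|\Re(\al)|<1+|\al|^{2}$ (AM--GM, strict because $\al$ is not real of modulus one) together with $(n+1)-|\al|^{2}(n-1)\ge 2$, and these inequalities force the dominant term to win in each of the expressions above, yielding
\[
\operatorname{sign}p_n(g(j\pi/n))=(-1)^{n+j},\qquad j=0,1,\ldots,n.
\]

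Since consecutive signs are opposite, the intermediate value theorem furnishes a real root of $p_n$ in each of the $n$ open intervals $\bigl(g((j-1)\pi/n),g(j\pi/n)\bigr)$, $j=1,\ldots,n$. Hermiticity of $A_{\al,n}$ guarantees that $p_n$ has exactly $n$ real roots counted with multiplicity, so these $n$ roots exhaust the spectrum: they are simple, they all lie in $(0,4)$, and in increasing order they satisfy~\eqref{eq:weak_localization}. The step I expect to be the main source of error is the rank-two determinant reduction leading to the compact formula for $p_n$: the bookkeeping of signs, Chebyshev indices and the Tur\'{a}n identity all have to be managed carefully. Everything afterwards is a routine trigonometric sign chase plus an IVT and counting argument, provided one verifies that the borderline evaluations at $\theta=0,\pi$ do continue the alternating pattern when $\al\neq\pm1$.
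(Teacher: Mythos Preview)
Your approach is correct in spirit and constitutes a genuinely different, more elementary route than the paper's. There is, however, one concrete error worth flagging.

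\medskip
\textbf{The sign slip.} Your closed form for $p_n$ is off in the constant term: the correct identity (the paper's Proposition~\ref{prop:char_pol_via_Cheb}) is
\[
p_n(\lambda)=U_n\!\left(\tfrac{\lambda-2}{2}\right)-|\al|^2\,U_{n-2}\!\left(\tfrac{\lambda-2}{2}\right)-2(-1)^n\Re(\al),
\]
not $+2\Re(\al)$. The missing $(-1)^{n+1}$ comes from the corner resolvent entry: for the tridiagonal $B=\lambda I-A_{0,n}$ one has $(B^{-1})_{1,n}=(-1)^{n+1}/U_n$, not $1/U_n$. Fortunately your subsequent sign argument is insensitive to this: at the interior grid points the correct value is $(-1)^{n+j}(1+|\al|^2)-2(-1)^n\Re(\al)$, and the inequality you use, $2|\Re(\al)|<1+|\al|^2$ (strict because $\al\ne\pm1$), is exactly what forces $\operatorname{sign}p_n(g(j\pi/n))=(-1)^{n+j}$ regardless of which sign sits in front of $\Re(\al)$. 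The same robustness holds at the endpoints. So your self-identified ``main source of error'' did in fact fire, but harmlessly.

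\medskip
\textbf{Comparison with the paper.} The paper does \emph{not} argue by sign alternation. Instead it rewrites $\det(g(x)I_n-A_{\al,n})$ in trigonometric form (Proposition~\ref{prop:char_pol_trig}), factors it as a quadratic in $\tan\tfrac{nx}{2}$, and shows that on each subinterval $I_{n,j}$ exactly one of the two resulting equations $\tan\tfrac{nx}{2}=u_{\al,j}(x)$ has a solution, by comparing a strictly increasing function against a strictly decreasing one (Proposition~\ref{prop:weak_equation_tan}). That argument is more laborious for the localization statement alone, but it is not wasted work: it simultaneously sets up the fixed-point equation~\eqref{eq:weak_equation} needed for Theorems~\ref{thm:weak_equation} and~\ref{thm:weak_asympt}. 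Your IVT-plus-counting argument is cleaner and entirely self-contained for Theorem~\ref{thm:weak_localization}, but it does not by itself yield the convenient form of the characteristic equation that the rest of the paper relies on.
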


The minimal value of the
generating function $h_{\al,n}$,
for $|\al|\le 1$,
may be strictly negative.
So, even the inequality $\la_{\al,n,1}>0$,
which is a very small part of Theorem~\ref{thm:weak_localization},
is not obvious.

Theorem~\ref{thm:weak_localization} implies
that the eigenvalues of $A_{\al,n}$,
as $n$ tends to $\infty$,
are asymptotically distributed as the values of the function $g$.
This follows also from
the theory of locally Toeplitz sequences
\cite{Tyrtyshnikov1996,Tilli1998,GS2017}
or, more specifically,
from Cauchy interlacing theorem,
since the matrices $A_{\al,n}$ are obtained from the tridiagonal Toeplitz matrices $A_{0,n}$ by low-rank perturbations.

Our next goal is to transform
the characteristic equation into a convenient form.
For every $\al$ in $\bC$ with $\al\notin\{-1,1\}$ and every integer $j$
we define the function $\eta_{\al,j}\colon[0,\pi]\to\bR$ by
\begin{equation}\label{eq:eta}
\eta_{\al,j}(x)
\eqdef -2\arctan\left( \left((-1)^{j+1}k_\al\cot{(x)} + \sqrt{k_\al^2\cot^2{(x)}+l_\al^2}\right)^{(-1)^j} \right),
\end{equation}
where
\begin{equation}\label{eq:kl_def}
k_\al
\eqdef
\frac{1-|\al|^2}{|1+\al|^2},\qquad
l_\al \eqdef
\frac{|1-\al|}{|1+\al|}.
\end{equation}
In fact, $\eta_{\al,j}$ depends only on $\al$ and on the parity of $j$.
Thus, for every $\al$
there are only two different functions:
$\eta_{\al,1}$ and $\eta_{\al,2}$.
These functions take values in $[-\pi,0]$.
See a couple of examples on Figure~\ref{fig:eta_weak_and_strong}.

\begin{figure}[ht]
\centering
\includegraphics{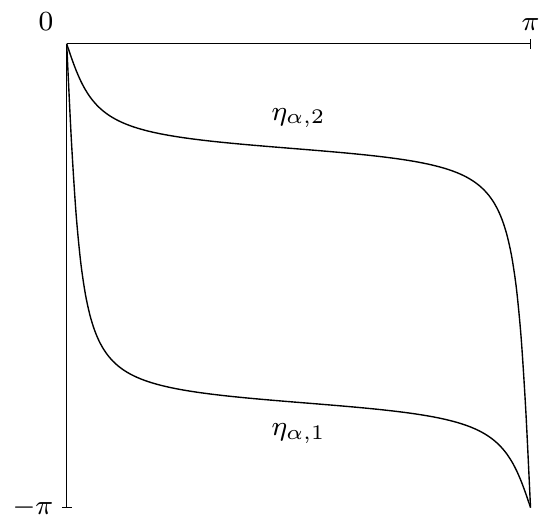}
\qquad
\includegraphics{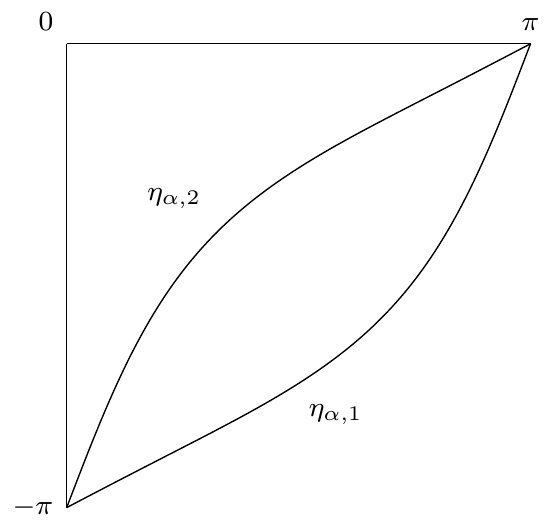}
\caption{Functions~\eqref{eq:eta} 
for $\al=0.7+0.6i$ (left)
and $\al=2+i$ (right).
\label{fig:eta_weak_and_strong}
}
\end{figure}

Motivated by~\eqref{eq:weak_localization},
we use the function $g$ as a change of variable
in the characteristic equation and put
$\tht_{\al,n,j}\eqdef g^{-1}(\la_{\al,n,j})$.
Inequality~\eqref{eq:weak_localization} is equivalent to
\begin{equation}\label{eq:weak_localization_theta}
\frac{(j-1)\pi}{n}
<\tht_{\al,n,j}<
\frac{j\pi}{n}.
\end{equation}

\begin{theorem}[characteristic equation for weak perturbations]
\label{thm:weak_equation}
Let $\al\in\bC$, $|\al|\le 1$, $\al\notin\{-1,1\}$, $n\ge 3$,
and $1\le j\le n$.
Then the number $\tht_{\al,n,j}$ satisfies
\begin{equation}\label{eq:weak_equation}
\tht_{\al,n,j}
=\frac{j\pi+\eta_{\al,j}(\tht_{\al,n,j})}{n}.
\end{equation}
\end{theorem}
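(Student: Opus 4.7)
The plan is to convert the eigenvalue problem for $A_{\al,n}$ into a scalar trigonometric equation via the standard eigenvector recurrence, and then repackage that equation as the fixed-point form~\eqref{eq:weak_equation}. Writing the equation $A_{\al,n} v = \la v$ as the recurrence $v_{k+1} = (2-\la) v_k - v_{k-1}$, the perturbed first and last rows impose the ``boundary'' conditions $v_0 = \overline{\al}\,v_n$ and $v_{n+1} = \al\,v_1$. With $\la = g(\tht) = 2 - 2\cos\tht$ and $z = e^{i\tht}$, the general solution is $v_k = C z^k + D z^{-k}$, and the two boundary conditions yield a homogeneous $2\times 2$ system in $(C,D)$. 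Setting its determinant to zero and simplifying via $1 - e^{2i\tht} = -2i e^{i\tht}\sin\tht$ should collapse to the clean identity
\[
\sin((n+1)\tht) - |\al|^2 \sin((n-1)\tht) = 2\Re(\al)\,\sin\tht,
\]
which is the characteristic equation in trigonometric form.

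By Theorem~\ref{thm:weak_localization} we have $\tht_{\al,n,j} \in (0,\pi)$, so $\sin\tht \ne 0$, and the identities $\sin((n\pm 1)\tht) = \sin(n\tht)\cos\tht \pm \cos(n\tht)\sin\tht$ let me rewrite the equation as
\[
(1-|\al|^2)\cot\tht\cdot \sin(n\tht) + (1+|\al|^2)\cos(n\tht) = 2\Re(\al).
\]
Inserting the ansatz $n\tht = j\pi + \eta$---which turns $\sin(n\tht)$ into $(-1)^j\sin\eta$ and $\cos(n\tht)$ into $(-1)^j\cos\eta$---and dividing by $|1+\al|^2$, the identities $\frac{1-|\al|^2}{|1+\al|^2} = k_\al$, $\frac{|1-\al|^2}{|1+\al|^2} = l_\al^2$, and the algebraic verification $\frac{4\Re(\al)+|1-\al|^2}{|1+\al|^2} = 1$ should reduce everything to a relation in $k_\al$, $l_\al$, $\cot\tht$, and $\eta$ alone.

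The Weierstrass substitution $u = -\tan(\eta/2)$ then converts this into a quadratic in $u$: for $j$ even I expect to obtain $u^2 + 2 k_\al \cot\tht\cdot u - l_\al^2 = 0$, and for $j$ odd the form $l_\al^2 u^2 + 2 k_\al \cot\tht\cdot u - 1 = 0$. The localization bound~\eqref{eq:weak_localization_theta} forces $\eta \in (-\pi, 0)$, whence $u > 0$; in each quadratic the product of the roots is negative, so the positive root is unique, and inverting via $\eta = -2\arctan u$ should reproduce~\eqref{eq:eta}, once the denominator in the $j$-odd case is rationalized in order to absorb the minus sign into the $(-1)^{j+1}$ coefficient of $k_\al\cot\tht$.

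The main obstacle is purely bookkeeping: a single family of manipulations must produce the two branches of $\eta_{\al,j}$ consistently, with the correct sign of the square root selected in each parity class. The analytic content beyond the routine algebra is minimal---Theorem~\ref{thm:weak_localization} is invoked twice, first to ensure $\sin\tht \ne 0$ so that division by $\sin\tht$ is legal, and second to localize $\eta$ in $(-\pi, 0)$ and thereby single out the positive root of the quadratic.
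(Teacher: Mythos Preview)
Your proposal is correct, and the underlying algebra is equivalent to the paper's. You derive the trigonometric characteristic equation
\[
\sin((n+1)\tht)-|\al|^2\sin((n-1)\tht)=2\Re(\al)\sin\tht
\]
via the eigenvector recurrence with the boundary conditions $v_0=\overline{\al}\,v_n$, $v_{n+1}=\al\,v_1$, whereas the paper reaches the same equation (its~\eqref{eq:char_pol_trig0}) through the Chebyshev-polynomial formula for $\det(\la I_n-A_{\al,n})$ in Proposition~\ref{prop:char_pol_via_Cheb}. From that point, the paper writes the equation as a quadratic in $\tan\frac{nx}{2}$ (Proposition~\ref{prop:char_pol_trig}), while you first shift $n\tht=j\pi+\eta$ and solve a quadratic in $u=-\tan(\eta/2)$; the two quadratics are interchanged by the substitutions $\tan\frac{nx}{2}=-u$ for even $j$ and $\tan\frac{nx}{2}=1/u$ for odd $j$, so this is the same computation in different coordinates. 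The one genuine structural difference is logical order: the paper's Proposition~\ref{prop:weak_equation_tan} proves Theorems~\ref{thm:weak_localization} and~\ref{thm:weak_equation} \emph{simultaneously}, using a monotonicity argument on $I_{n,j}$ that both localizes the root and picks out the correct sign of the square root, whereas you take Theorem~\ref{thm:weak_localization} as a black box and use $\eta\in(-\pi,0)$ to select the positive root of your quadratic. That is legitimate given the order of the statements, and it makes your argument for Theorem~\ref{thm:weak_equation} alone slightly shorter; the paper's route has the advantage of delivering the localization for free.
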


\begin{comment}
Figure~\ref{fig:weak} shows
the points $\tht_{\al,n,j}$ and the corresponding eigenvalues $\la_{\al,n,j}$ for one example with $|\al|<1$.

\begin{figure}
\centering
\begin{tikzpicture}
\def\xscale{1.5}
\def\yscale{1}
\def\mypi{3.1416}
\draw (0, 0) -- (\mypi*\xscale, 0);
\draw (0, 0) -- (0, 4*\yscale);
\node at (0, 0) [below] {$\scriptstyle 0$};
\node at (\mypi*\xscale, 0)
  [below] {$\scriptstyle \pi$};
\draw (-0.05, 4*\yscale) -- (0.05, 4*\yscale);
\node at (0, 4*\yscale) [left] {$\scriptstyle 4$};
\draw plot[smooth] file {g.txt};
\foreach \j in {1,2,3,4,5}
{
\draw (\j*\mypi*\xscale/ 5, 0)
  --(\j*\mypi*\xscale/ 5, 0.1);
}
\def\thta{0.22386}
\def\thtb{1.12178}
\def\thtc{1.40192}
\def\thtd{2.35393}
\def\thte{2.62947}
\def\lama{0.04990}
\def\lamb{1.13184}
\def\lamc{1.66384}
\def\lamd{3.41100}
\def\lame{3.74341}
\foreach \j/\t/\el in
  {1/\thta/\lama,
  2/\thtb/\lamb,
  3/\thtc/\lamc,
  4/\thtd/\lamd,
  5/\thte/\lame}
{
\draw [blue, densely dotted]
  (\t*\xscale, -0.1)
  -- (\t*\xscale, \el*\yscale)
  -- (-0.1, \el*\yscale);
\node at (\t*\xscale, 0) [below]
  {$\scriptstyle\tht_\j$};
\node at (0, \el*\yscale) [left]
  {$\scriptstyle\la_\j$};
}
\end{tikzpicture}
\caption{The points $(\tht_{\al,n,j},\la_{\al,n,j})$
for $\al=0.7+0.6i$, $n=5$, $1\le j\le 5$.
The partition of $[0,\pi]$ into $5$ equal parts
illustrates~\eqref{eq:weak_localization_theta}.
Different scales are used for the axis.
\label{fig:weak}
}
\end{figure}
\end{comment}

In Section~\ref{sec:proofs_weak} we
show that for $n>N_1(\al)$, where
\begin{equation}\label{eq:N1}
N_1(\al)\eqdef \frac{4(|\al|+1)}{||\al|-1|},
\end{equation}
and for $1\le j\le n$,
the function in the right-hand side of~\eqref{eq:weak_equation} is contractive.
Hence, equation~(\ref{eq:weak_equation})
can be solved by the fixed point iteration.
Furthermore,
we use~\eqref{eq:weak_equation}
to derive asymptotic formulas for the eigenvalues.

For $|\al|\le 1$ and $1\le j\le n$,
define $\laasympt_{\al,n,j}$ by
\begin{equation}\label{eq:laasympt}
\begin{aligned}
\laasympt_{\al,n,j}
&\eqdef g\left(\frac{j\pi}{n}\right)
+\frac{g'\left(\frac{j\pi}{n}\right)
\eta_{\al,j}\left(\frac{j\pi}{n}\right)}{n}
\\
&\quad+\frac{g'\left(\frac{j\pi}{n}\right)
\eta_{\al,j}\left(\frac{j\pi}{n}\right)\eta_{\al,j}'\left(\frac{j\pi}{n}\right)
+\frac{1}{2}g''\left(\frac{j\pi}{n}\right)
\eta_{\al,j}\left(\frac{j\pi}{n}\right)^2}{n^2}.
\end{aligned}
\end{equation}

\begin{theorem}[asymptotic expansion of the eigenvalues for weak perturbations]
\label{thm:weak_asympt}
Let $\al\in\bC$, $|\al|\le 1$.
Then there exists $C_1(\al)>0$ such that
for $n$ large enough and $1\le j\le n$,
\begin{equation}\label{eq:weak_lambda_asympt}
|\la_{\al,n,j}-\laasympt_{\al,n,j}|
\le\frac{C_1(\al)}{n^3}.
\end{equation}
\end{theorem}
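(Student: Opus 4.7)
The plan is to start from the fixed-point equation~\eqref{eq:weak_equation}, iterate it twice to obtain an expansion of $\tht_{\al,n,j}$ accurate to $O(n^{-3})$, and then push this expansion through $g$ via Taylor's theorem to obtain the desired expansion for $\la_{\al,n,j}=g(\tht_{\al,n,j})$. The particular grouping of terms in~\eqref{eq:laasympt} is precisely what a formal Taylor expansion produces when one keeps every contribution up to order $n^{-2}$, so once the error bounds are in place the identification with $\laasympt_{\al,n,j}$ is automatic. A preliminary lemma, which I would state separately, is that $\eta_{\al,j}$ is of class $C^{3}$ on the closed interval $[0,\pi]$, with $\eta_{\al,j}$, $\eta_{\al,j}'$, $\eta_{\al,j}''$, $\eta_{\al,j}'''$ bounded in sup norm by a constant depending only on~$\al$. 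Although~\eqref{eq:eta} involves $\cot(x)$, either the combination $(-1)^{j+1}k_\al\cot(x)+\sqrt{k_\al^2\cot^2(x)+l_\al^2}$ stays away from both $0$ and $\infty$ inside $(0,\pi)$, or the exponent $(-1)^j$ turns a potential singularity into a removable one; the asymptotic analysis at the endpoints yields smooth extensions with $\eta_{\al,j}(0)=0$ and $\eta_{\al,j}(\pi)=-\pi$.

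With this in hand, set $x_j\eqdef j\pi/n$ and $\delta_j\eqdef\tht_{\al,n,j}-x_j$, so that~\eqref{eq:weak_equation} rewrites as $\delta_j=\eta_{\al,j}(x_j+\delta_j)/n$. Since $|\eta_{\al,j}|\le\pi$, one immediately has $|\delta_j|\le\pi/n$. Applying Taylor's formula to $\eta_{\al,j}$ at $x_j$ and rearranging the resulting identity
\[
\delta_j\Bigl(1-\frac{\eta_{\al,j}'(x_j)}{n}\Bigr)
=\frac{\eta_{\al,j}(x_j)}{n}+\frac{\eta_{\al,j}''(\xi_j)\,\delta_j^2}{2n}
\]
yields, for all $n>N_1(\al)$ and $1\le j\le n$,
\[
\delta_j
=\frac{\eta_{\al,j}(x_j)}{n}
+\frac{\eta_{\al,j}(x_j)\,\eta_{\al,j}'(x_j)}{n^2}
+O(n^{-3}),
\]
with the implied constant depending only on $\al$ and in particular uniform in $j$.

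Finally, expand $\la_{\al,n,j}=g(x_j+\delta_j)$ to third order around $x_j$, substitute the expansion of $\delta_j$, collect the contributions of orders $n^{-1}$ and $n^{-2}$, and estimate the remainders using the $C^{3}$-boundedness of $g$ and $\eta_{\al,j}$ on $[0,\pi]$. The result matches~\eqref{eq:laasympt} term by term with an error of order $n^{-3}$, uniform in $j$, which gives~\eqref{eq:weak_lambda_asympt}. The main obstacle, and essentially the only non-routine step, is the preliminary regularity lemma for $\eta_{\al,j}$: the formula~\eqref{eq:eta} looks singular at $x=0$ and $x=\pi$, and one must confirm by direct expansion that $\eta_{\al,j}$ and its first three derivatives admit bounded extensions to the closed interval, with constants controlled by $k_\al$ and $l_\al$ (hence by $\al$). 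Once this is done, the remainder of the argument is a straightforward bookkeeping of Taylor expansions.
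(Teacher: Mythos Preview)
Your proposal is correct and follows essentially the same route as the paper: the paper proves a regularity lemma for $\eta_{\al,j}$ (Proposition~\ref{prop:derivatives_eta}), then iterates the fixed-point equation to obtain the expansion of $\tht_{\al,n,j}$ to order $O(n^{-3})$ (Proposition~\ref{prop:weak_theta_asympt}), and finally pushes through $g$ by Taylor expansion---exactly your three steps. The only notable difference is in the regularity lemma: rather than expanding $\eta_{\al,j}$ directly at the endpoints as you suggest, the paper rewrites $\eta_{\al,j}'$ in terms of $\tan(x)$ near $0$ and $\pi$ and observes that it extends analytically to a complex neighborhood of $[0,\pi]$, which yields boundedness of all derivatives in one stroke and is somewhat cleaner than tracking three derivatives by hand.
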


In other words, Theorem~\ref{thm:weak_asympt} claims that
$\la_{\al,n,j}=\laasympt_{\al,n,j}+O_\al\left(\frac{1}{n^3}\right)$, where the constant in the upper estimate of the residue term depends on $\al$.
For simplicity, we state and justify only this asymptotic formula with three exact terms,
but there are similar formulas with more terms.

Since the eigenvectors were found in~\cite{YuehCheng2008} for a more general matrix family,
we give the corresponding formulas in Propositions~\ref{prop:weak_eigenvectors} and~\ref{prop:strong_eigenvectors} without proofs.

\subsection{Main results for strong perturbations}

For the sake of simplicity,
we decided to state the following theorems only for large values of $n$.
As a sufficient condition,
we require $n>N_2(\al)$, where
\begin{equation}\label{eq:N2}
N_2(\al)\eqdef
\frac{20\log(|\al|+1)-4\log(\log(|\al|))}{\log|\al|}.
\end{equation}

\begin{theorem}[localization of the eigenvalues for strong perturbations]
\label{thm:strong_localization}
Let $\al\in\bC$, $|\al|>1$, and $n\ge N_2(\al)$.
Then
\[
\la_{\al,n,1}<0,\qquad
\la_{\al,n,n}>4,
\]
whereas for $2\le j\le n-1$
the eigenvalues $\la_{\al,n,j}$
belong to $(0,4)$ and satisfy~\eqref{eq:weak_localization}.
\end{theorem}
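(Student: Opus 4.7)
The plan is to treat the bulk (middle) eigenvalues and the two escaping eigenvalues separately, in both cases by reducing $\det(\la I_n - A_{\al,n}) = 0$ to a transcendental fixed-point equation via a suitable change of variable. Since the only genuinely new ingredient compared with the weak case is the behavior near the endpoints of the spectrum, much of the machinery of Section~\ref{sec:proofs_weak} can be reused.

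\textbf{Middle eigenvalues.} For $\la \in (0,4)$ I would substitute $\la = g(\tht)$ with $\tht \in (0,\pi)$. The derivation that led to \eqref{eq:weak_equation} does not really use the inequality $|\al| \le 1$, only $\al \notin \{-1, 1\}$, so the same fixed-point form $n\tht = j\pi + \eta_{\al,j}(\tht)$ should apply with $\eta_{\al,j}$ defined by \eqref{eq:eta}. The novelty is that for $|\al| > 1$ the function $\eta_{\al,j}$ may stray outside $[-\pi, 0]$ near $\tht = 0$ and $\tht = \pi$, which is precisely why this trigonometric approach can capture only the ``inner'' roots. The task here is to show that for $n \ge N_2(\al)$ and each $j \in \{2, \ldots, n-1\}$, the relevant branch of $\eta_{\al,j}$ takes values in $(-\pi, 0)$ on $\bigl[(j-1)\pi/n,\, j\pi/n\bigr]$; together with a contraction estimate analogous to the one available for $n > N_1(\al)$ in the weak case, this yields a unique fixed point in each such interval, producing $n-2$ distinct eigenvalues in $(0,4)$ satisfying \eqref{eq:weak_localization}.

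\textbf{Escaping eigenvalues.} To find eigenvalues outside $[0,4]$, I would substitute $\la = 2 - 2\cosh s$ for $\la < 0$ and $\la = 2 + 2\cosh s$ for $\la > 4$, with $s > 0$. Since $U_n(\pm\cosh s) = (\pm 1)^n \sinh((n+1)s)/\sinh s$, each of these substitutions converts the same characteristic polynomial into a hyperbolic analogue of \eqref{eq:weak_equation}, whose dominant term is exponential in $n$. For $|\al| > 1$ both analogues admit a unique positive solution whose limit as $n \to \infty$ equals $\log|\al|$ (up to the appropriate shift), while for $|\al| \le 1$ neither does; the specific form of $N_2(\al)$ in \eqref{eq:N2} is precisely what guarantees existence and uniqueness of this hyperbolic fixed point, since the remainder terms involve factors of $|\al|^{-n}$ that must be absorbed by a $n^{-c}$ margin. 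Together with the $n-2$ middle eigenvalues, these two escaping roots account for all $n$ eigenvalues of $A_{\al,n}$, and by the ordering they must be $\la_{\al,n,1}$ and $\la_{\al,n,n}$.

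\textbf{Main obstacle.} The delicate part is the endpoint analysis of the trigonometric equation: one must show that no trigonometric fixed point survives in $(0, \pi/n)$ or $((n-1)\pi/n, \pi)$ when $|\al| > 1$. Intuitively, the strong perturbation pushes $\eta_{\al,1}$ below $-\pi$ on the first subinterval (and shifts $\eta_{\al,n}$ analogously on the last one), so \eqref{eq:weak_equation} has no admissible solution there; quantifying this requires the explicit bound $N_2(\al)$. This simultaneous squeeze---removing the two boundary trigonometric roots while producing two new hyperbolic ones---is the heart of the theorem and the reason the threshold \eqref{eq:N2} has its specific form involving a $\log\log$ correction, which is dictated by balancing $|\al|^{-n}$ against a polynomially small error.
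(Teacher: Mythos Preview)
Your overall plan matches the paper's: handle the inner $n-2$ eigenvalues with the same trigonometric fixed-point map $f_{\al,n,j}$ as in the weak case, and capture the two extreme eigenvalues via the hyperbolic substitutions $\la=g_\pm(x)$. But your diagnosis of the mechanism is off in two places, and fixing them actually simplifies the argument.

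First, $\eta_{\al,j}$ does \emph{not} leave $[-\pi,0]$ when $|\al|>1$; it still takes values in $[-\pi,0]$. What changes is only the boundary behavior: since now $k_\al<0$, one gets $\eta_{\al,j}(0)=-\pi$ and $\eta_{\al,j}(\pi)=0$, the reverse of the weak case. Hence the contractive map $f_{\al,n,1}$ has its unique fixed point exactly at $x=0$, and $f_{\al,n,n}$ at $x=\pi$. A direct evaluation of $\det(\la I_n-A_{\al,n})$ at $\la=0,4$ (as in Proposition~\ref{prop:0_and_4_are_not_eigenvalues_weak}) shows these are not eigenvalues, so the trigonometric approach simply yields nothing for $j=1$ and $j=n$. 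There is no ``pushing $\eta$ below $-\pi$'' to quantify, and no delicate endpoint squeeze.

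Second, the threshold $N_2(\al)$ is not used in the trigonometric analysis at all; contractivity of $f_{\al,n,j}$ for $2\le j\le n-1$ only requires $n>N_1(\al)$, exactly as in the weak case. The constant $N_2(\al)$ enters in the \emph{hyperbolic} part: it is what forces $\tanh\frac{nx}{2}$ to be close enough to $1$ on the segment $S_\al=[\tfrac12\log|\al|,\tfrac32\log|\al|]$ so that the right-hand side of~\eqref{eq:eq_first} maps $S_\al$ into itself and is contractive there. The $\log\log|\al|$ term in~\eqref{eq:N2} arises from bounding $n e^{-n\log|\al|/4}$ in that contraction estimate, not from any trigonometric consideration.
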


For some $\al$ with $|\al|>1$
and for small values of $n$, the eigenvalues
$\la_{\al,n,1}$ and $\la_{\al,n,n}$ can belong to $[0,4]$.
So, the condition $n>N_2(\al)$
in Theorem~\ref{thm:strong_localization} cannot be omitted.

In order to solve the characteristic equation for $\la<0$ and $\la>4$, we use the following changes of variables, respectively:
\begin{equation}\label{eq:g_plus_minus}
g_-(x)\eqdef -4\sinh^2(x/2),\qquad
g_+(x)\eqdef 4+4\sinh^2(x/2)\qquad(x>0).
\end{equation}

\begin{theorem}[characteristic equations for strong perturbations]
\label{thm:strong_equation}
Let $\al\in\bC$, $|\al|>1$, and $n\ge N_2(\al)$.
Then
\[
\la_{\al,n,1}=g_-(\tht_{\al,n,1}),\qquad
\la_{\al,n,n}=g_+(\tht_{\al,n,n}),
\]
where $\tht_{\al,n,1}$ is the unique positive solution of the equation
\begin{equation}\label{eq:eq_first}
x
=\arctanh\frac{2(|\al|^2-1)\tanh\frac{nx}{2}}%
{|\al+1|^2\tanh^2\frac{nx}{2}
+|\al-1|^2},
\end{equation}
and $\tht_{\al,n,n}$ is the unique positive solution of the equation
\begin{equation}\label{eq:eq_last}
x=\arctanh
\frac{2(|\al|^2-1)\tanh\frac{nx}{2}}%
{|\al+(-1)^n|^2\tanh^2\frac{nx}{2}
+|\al-(-1)^n|^2}.
\end{equation}
For $2\le j\le n-1$,
the eigenvalues $\la_{\al,n,j}$ can be found as in Theorem~\ref{thm:weak_equation}.
\end{theorem}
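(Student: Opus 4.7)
The overall strategy is to extract from the characteristic polynomial a trigonometric form compatible with the location of the eigenvalue under consideration. Expanding $\det(\la I_n-A_{\al,n})$ by recognizing $A_{\al,n}$ as a rank-two perturbation of the tridiagonal Toeplitz matrix $A_{0,n}$ yields an explicit combination of $U_n\!\left(\tfrac{\la-2}{2}\right)$ and $U_{n-2}\!\left(\tfrac{\la-2}{2}\right)$, with coefficients depending only on $|\al|^2$ and $\Re(\al)$. This formula is polynomial in $\la$, hence valid on all of $\bC$; the three regions $\la<0$, $\la\in(0,4)$, $\la>4$ differ only in which substitution turns the Chebyshev values into elementary functions.

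For the $n-2$ interior eigenvalues, Theorem~\ref{thm:strong_localization} asserts $\la_{\al,n,j}\in(0,4)$ for $2\le j\le n-1$, so the change of variable $\la=g(\tht)$ that is used to prove Theorem~\ref{thm:weak_equation} in Section~\ref{sec:proofs_weak} applies: no step in that derivation will use $|\al|\le 1$, only $\la\in(0,4)$, hence~\eqref{eq:weak_equation} holds for these indices without any new work.

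For the two extreme eigenvalues I would use the substitutions $\la=g_-(x)$ and $\la=g_+(x)$ with $x>0$, which send $\tfrac{\la-2}{2}$ to $-\cosh x$ and $+\cosh x$ respectively. The identity
\[
U_n(\pm\cosh x)=(\pm 1)^n\,\frac{\sinh((n+1)x)}{\sinh x},
\]
obtained from $U_n(\cos\phi)=\sin((n+1)\phi)/\sin\phi$ via $\phi=ix$ and $\phi=\pi-ix$, turns the characteristic equation into a hyperbolic identity. After clearing the common factor $1/\sinh x$, splitting $\sinh((n\pm 1)x)$ through the addition formula into products of $\sinh(nx/2)$, $\cosh(nx/2)$, $\sinh(x/2)$ and $\cosh(x/2)$, and collecting terms by parity, I expect the equation in the regime $\la<0$ to collapse to
\[
\bigl(|\al+1|^2\tanh^2(nx/2)+|\al-1|^2\bigr)\tanh x
=2(|\al|^2-1)\tanh(nx/2),
\]
which is exactly~\eqref{eq:eq_first} after applying $\arctanh$. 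In the regime $\la>4$ the sign $(-1)^n$ present in $U_n(-\cosh x)$ disappears, and tracking this sign through the same manipulations turns $|\al\pm 1|$ into $|\al\pm(-1)^n|$, producing~\eqref{eq:eq_last}.

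Uniqueness of the positive solution, and its identification with $\tht_{\al,n,1}$ or $\tht_{\al,n,n}$, follow from Theorem~\ref{thm:strong_localization} together with the fact that $g_-$ and $g_+$ are bijections $(0,\infty)\to(-\infty,0)$ and $(0,\infty)\to(4,\infty)$ respectively: each positive solution of~\eqref{eq:eq_first} corresponds, via $g_-$, to an eigenvalue of $A_{\al,n}$ in $(-\infty,0)$ (and analogously for~\eqref{eq:eq_last} with $g_+$ and $(4,\infty)$), while the localization theorem guarantees exactly one such eigenvalue when $n\ge N_2(\al)$. The step I expect to be the most delicate is the algebraic collapse itself: making all the $n$-dependence concentrate into $\tanh(nx/2)$ and verifying that the remaining $\al$-dependent coefficients reassemble into $|\al\pm 1|^2$ (or $|\al\pm(-1)^n|^2$) requires patient bookkeeping, even though no conceptual difficulty is hidden behind it.
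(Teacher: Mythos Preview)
Your derivation of the hyperbolic forms \eqref{eq:eq_first} and \eqref{eq:eq_last} is exactly what the paper does in Proposition~\ref{prop:chareq_strong_hyper}, and your treatment of the interior eigenvalues via the trigonometric change of variable matches Proposition~\ref{prop:strong_equation_tan_and_fixed_point}. The one point to watch is your appeal to Theorem~\ref{thm:strong_localization} for existence and uniqueness of the extreme solutions: in the paper, Theorems~\ref{thm:strong_localization} and~\ref{thm:strong_equation} are not proven sequentially but \emph{together}---the existence of an eigenvalue below $0$ (and one above $4$) is obtained by showing that the right-hand side of \eqref{eq:eq_first} is a contraction on the interval $S_\al$ of \eqref{eq:segment} (Lemma~\ref{lem:psi} and Proposition~\ref{prop:h_contractive}), and only then does the counting argument yield both theorems. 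So your invocation of Theorem~\ref{thm:strong_localization} is legitimate provided that theorem has an independent proof; otherwise you are implicitly relying on the contractivity lemma you have omitted. (An independent route does exist: a sign check of $\det(-A_{\al,n})$ and $\det(4I_n-A_{\al,n})$ against the leading term of the characteristic polynomial, using the formulas in the proof of Proposition~\ref{prop:0_and_4_are_not_eigenvalues_weak}, forces one root on each side of $[0,4]$ for $n$ large.)
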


Moreover, in Section~\ref{sec:proofs_strong}
we prove that the right-hand sides of~\eqref{eq:eq_first} and~\eqref{eq:eq_last}
are contractive functions on the segment
\begin{equation}\label{eq:segment}
S_\al\eqdef
\left[\frac{\log|\al|}{2},\frac{3\log|\al|}{2}\right].
\end{equation}

Figure~\ref{fig:strong} shows an example.
In this figure,
we glued together three changes of variables
($g_-$, $g$, and $g_+$)
into one spline, using appropriate shifts or reflections.

\begin{figure}[ht]
\centering
\includegraphics{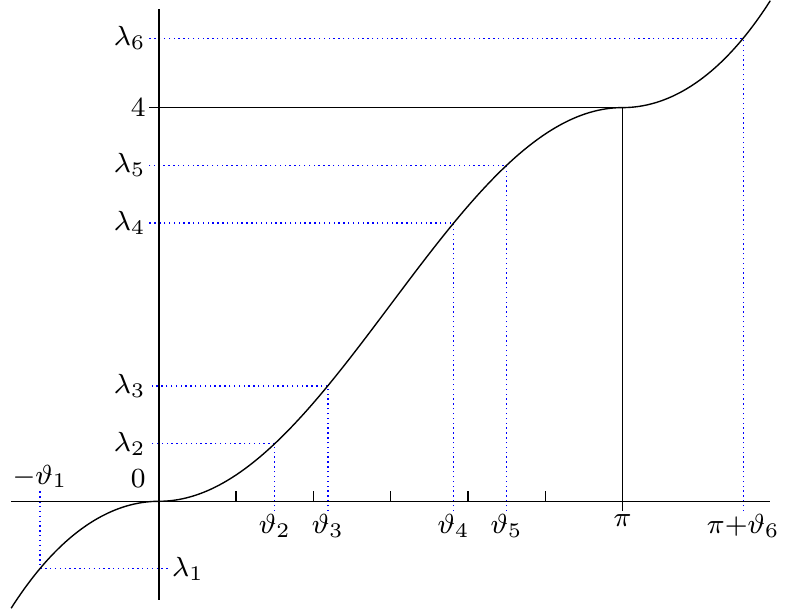}
\caption{The points $\tht_{\al,n,j}$
and the corresponding values of $\la_{\al,n,j}$ for $\al=2+i$, $n=6$.
Different scales are used for the axis.
\label{fig:strong}
}
\end{figure}

There is another way to see the changes of variables~\eqref{eq:g_plus_minus}:
after extending $g$ to an entire function,
$g_-(x)=g(ix)$ and $g_+(x)=g(\pi+ix)$.
So, the eigenvalues are obtained
by evaluating the function $g$
at some points belonging to
a piecewise linear path on the complex plane,
see Figure~\ref{fig:changes_of_variables}.

\begin{figure}[ht]
\centering
\includegraphics{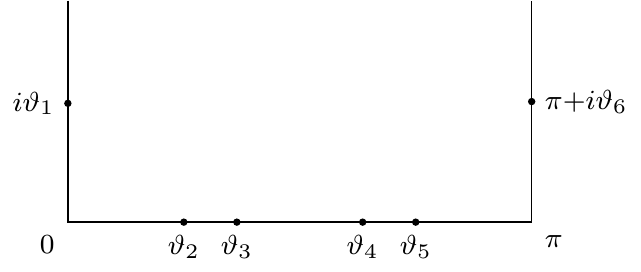}
\caption{Points on the complex plane
that yield the eigenvalues $\la_{\al,n,j}$
after applying the function $g$.
In this example, $\al=2+i$ and $n=6$.}
\label{fig:changes_of_variables}
\end{figure}

In order to describe the asymptotic behavior of the extreme eigenvalues
$\la_{\al,n,1}$ and $\la_{\al,n,n}$,
we introduce the following notation:
\begin{equation}
\label{eq:notation_limit_extreme_eigenvalue}
s_\al
\eqdef
|\al|-2+\frac{1}{|\al|},\quad\text{i.e.}\quad
s_\al
=\frac{(|\al|-1)^2}{|\al|}
=\left(\sqrt{|\al|}-\frac{1}{\sqrt{|\al|}}\right)^2.
\end{equation}

\begin{theorem}[asymptotic expansion of the eigenvalues for strong perturbations]
\label{thm:strong_asympt}
Let $\al\in\bC$, $|\al|>1$.
As $n$ tends to infinity,
the extreme eigenvalues
of $A_{\al,n}$ converge exponentially
to $-s_\al$ and $4+s_\al$,
respectively:
\begin{align}
\label{eq:asympt_strong_left}
|\la_{\al,n,1}+s_\al|
&\le\frac{C_2(\al)}{|\al|^n},
\\[0.5ex]
\label{eq:asympt_strong_right}
|\la_{\al,n,n}-4-s_\al|
&\le\frac{C_2(\al)}{|\al|^n}.
\end{align}
Here $C_2(\al)$ is a positive constant depending only on $\al$.
For $2\le j\le n-1$,
the eigenvalues $\la_{\al,n,j}$
satisfy the asymptotic formulas~\eqref{eq:weak_lambda_asympt}.
\end{theorem}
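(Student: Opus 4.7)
The plan is to split the theorem into three parts: the leftmost eigenvalue $\la_{\al,n,1}$, the rightmost $\la_{\al,n,n}$, and the block of middle eigenvalues $2\le j\le n-1$. For the two extremes I would exploit the fixed-point equations~\eqref{eq:eq_first}--\eqref{eq:eq_last} from Theorem~\ref{thm:strong_equation} together with the fact that $\tanh(nx/2)$ tends to $1$ exponentially in $n$ on the segment $S_\al$; for the middle block, the characteristic equation is again~\eqref{eq:weak_equation}, so the proof of Theorem~\ref{thm:weak_asympt} applies essentially verbatim and this part reduces to bookkeeping of the Taylor expansion already present in~\eqref{eq:laasympt}.

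To carry out the extreme analysis in detail, let $F_n$ denote the right-hand side of~\eqref{eq:eq_first}. A direct algebraic check gives
\[
\arctanh\frac{|\al|^2-1}{|\al|^2+1}
=\frac{1}{2}\log\frac{1+(|\al|^2-1)/(|\al|^2+1)}{1-(|\al|^2-1)/(|\al|^2+1)}
=\log|\al|,
\]
and a short computation using $2\sinh^2((\log|\al|)/2)=(\sqrt{|\al|}-1/\sqrt{|\al|})^2/2$ confirms $g_-(\log|\al|)=-s_\al$ and $g_+(\log|\al|)=4+s_\al$. Using the elementary bound $1-\tanh(u)\le 2e^{-2u}$ and expanding $F_n$ around $\tanh(nx/2)=1$, I would show that at the candidate limit $x_\ast\eqdef\log|\al|$,
\[
|F_n(\log|\al|)-\log|\al||\le K_1(\al)\,|\al|^{-n},
\]
since $e^{-nx_\ast}=|\al|^{-n}$ and $\arctanh$ has bounded derivative at the interior point $(|\al|^2-1)/(|\al|^2+1)$. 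Next, differentiating $F_n$ shows that its Lipschitz constant on $S_\al$ is $O(n|\al|^{-n/2})$ (because $\tanh'(nx/2)=(n/2)\operatorname{sech}^2(nx/2)$ is exponentially small for $x\ge(\log|\al|)/2$); in particular, for $n\ge N_2(\al)$ the contraction factor $L_n$ satisfies $1-L_n\ge 1/2$. The standard fixed-point perturbation inequality then yields
\[
|\tht_{\al,n,1}-\log|\al||
\le\frac{|F_n(\log|\al|)-\log|\al||}{1-L_n}
\le 2K_1(\al)\,|\al|^{-n},
\]
and applying the mean value theorem to $g_-$ on $S_\al$, where $g_-'$ is bounded, transfers the bound to $|\la_{\al,n,1}-g_-(\log|\al|)|=|\la_{\al,n,1}+s_\al|$, which is~\eqref{eq:asympt_strong_left}. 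The argument for $\la_{\al,n,n}$ is structurally identical starting from~\eqref{eq:eq_last}: the parity sign $(-1)^n$ only enters through $|\al\pm(-1)^n|^2$, but $|\al+(-1)^n|^2+|\al-(-1)^n|^2=2|\al|^2+2$ is insensitive to that sign, so the formal limit is again $\log|\al|$ and~\eqref{eq:asympt_strong_right} follows via $g_+$.

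For $2\le j\le n-1$, Theorem~\ref{thm:strong_equation} reduces the problem to equation~\eqref{eq:weak_equation}, and I would reproduce the proof of Theorem~\ref{thm:weak_asympt}: plug the first-order fixed-point bound $\tht_{\al,n,j}=j\pi/n+O_\al(1/n)$ into the right-hand side of~\eqref{eq:weak_equation}, then Taylor-expand $g$ and $\eta_{\al,j}$ around $j\pi/n$ to second order; collecting terms gives exactly $\laasympt_{\al,n,j}$ defined by~\eqref{eq:laasympt}, with residue $O_\al(1/n^3)$. The main obstacle, in my view, is the quantitative expansion of the fraction inside $\arctanh$ in~\eqref{eq:eq_first}--\eqref{eq:eq_last}: one must track the correction at $x_\ast=\log|\al|$ to a single power of $|\al|^{-n}$ (rather than the weaker $|\al|^{-n/2}$ that comes from $S_\al$ alone), and verify that the threshold $N_2(\al)$ and the constants arising from the Lipschitz control of $g_\pm$ and of $F_n$ are mutually compatible. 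Once that algebra is done carefully, the rest is the routine fixed-point mechanism.
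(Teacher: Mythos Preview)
Your proposal is correct and follows the same overall strategy as the paper: treat the extremes via the hyperbolic fixed-point equations, treat the middle block by repeating the Taylor argument behind Theorem~\ref{thm:weak_asympt}, and pass from $\tht$ to $\la$ by the mean value theorem applied to $g_\pm$. The identifications $\arctanh\frac{|\al|^2-1}{|\al|^2+1}=\log|\al|$, $g_-(\log|\al|)=-s_\al$, $g_+(\log|\al|)=4+s_\al$ are exactly the ones the paper uses (cf.\ \eqref{eq:psi_al_1} and the sentence after Proposition~\ref{prop:eq:diff_thtleft_logal}).

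The one genuine tactical difference is how you secure the sharp rate $|\al|^{-n}$ rather than the weaker $|\al|^{-n/2}$ coming from $S_\al$. The paper (Proposition~\ref{prop:eq:diff_thtleft_logal}) works at the unknown fixed point $x=\tht_{\al,n,1}$, first obtains $|x-\log|\al||\le|\al|^3 e^{-nx}$ from two applications of the mean value theorem, and then bootstraps: $x\ge\tfrac12\log|\al|$ gives $x\ge\log|\al|-|\al|^3|\al|^{-n/2}$, whence $nx\ge n\log|\al|-\text{const}$ and finally $e^{-nx}\le C_4(\al)|\al|^{-n}$. You instead evaluate the right-hand side $F_n$ directly at the \emph{known} point $\log|\al|$, where $e^{-nx}=|\al|^{-n}$ on the nose, and then invoke the standard contraction estimate $|\tht_{\al,n,1}-\log|\al||\le|F_n(\log|\al|)-\log|\al||/(1-L_n)$. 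Your route is a little cleaner because it sidesteps the bootstrap; the paper's route is more self-contained because it does not rely on the abstract fixed-point perturbation inequality. Both use the same analytic inputs (the bounds on $\psi_\al$ and $\psi_\al'$ from Lemma~\ref{lem:psi} and the Lipschitz control of $h_{\al,n}$ from Proposition~\ref{prop:h_contractive}), so neither buys anything the other cannot.
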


According to Theorem~\ref{thm:strong_asympt},
for $|\al|>1$ we define $\laasympt_{\al,n,j}$ by~\eqref{eq:laasympt} when $2\le j\le n-1$, and for $j=1$ or $j=n$ we put
\begin{equation}\label{eq:laasympt_extreme}
\laasympt_{\al,n,1}\eqdef -s_\al,\qquad
\laasympt_{\al,n,j}\eqdef 4+s_\al.
\end{equation}
Formulas~\eqref{eq:laasympt} and~\eqref{eq:laasympt_extreme}
yield very efficient approximations of the eigenvalues, when $n$ is large enough.

Theorem~\ref{thm:strong_asympt} implies that for a fixed $\al$ with $|\al|>1$
and $n\to\infty$,
the number $s_\al$ is the
``asymptotical lower spectral gap''
and also the
``asymptotical upper spectral gap''
of the matrices $A_{\al,n}$,
in the following sense:
\begin{align*}
\lim_{n\to\infty}(\la_{\al,n,2}-\la_{\al,n,1})
&=0-(-s_\al)=s_\al,
\\
\lim_{n\to\infty}(\la_{\al,n,n}-\la_{\al,n,n-1})
&=(4+s_\al)-4=s_\al.
\end{align*}
Recall that for a sequence of Toeplitz matrices,
generated by a bounded real-valued symbol
\emph{not depending on $n$},
the eigenvalues asymptotically fill the whole interval between the essential infimum and the essential supremum of the symbol~\cite{Widom1994}.
Nevertheless, a splitting phenomenon is known for
the singular values of some sequences
of non-Hermitean Toeplitz matrices~\cite[Section~4.3]{BS1999}.

\bigskip

\begin{comment}

\subsection*{Tareas pendientes}

\begin{egornote}
\begin{itemize}

\item Buscar m\'{a}s art\'{i}culos cercanos a nuestro tema (Toeplitz matrix, perturbation).

\item Hacer comprobaciones completas con alta precisi\'{o}n, para muchos valores de $\al$ y $n$.

\item Si $\al$ es real, entonces la matriz es sim\'{e}trica y persim\'{e}trica (y tambi\'{e}n centrosim\'{e}trica).
Por eso sus vectores propios tienen cierta simetr\'{i}a. Ser\'{i}a bien verlo por las f\'{o}rmulas.

\item Duda de ingl\'{e}s:
on/at/in the diagonal corners.
\begin{sotonote}Me parece que es con at:
https://dictionary.cambridge.org/es/gramatica/gramatica-britanica/at-on-and-in-place
\end{sotonote}

\item Usando nuestras f\'{o}rmulas exactas,
calcular las normas de los vectores propios
y acotarlas por abajo y por arriba.
Creo que en el caso de perturbaciones d\'{e}biles
las normas son de orden $\sqrt{n}$,
y en el caso de perturbaciones fuertes
las normas de los vectores propios extremos son de orden $|\al|^n$.

\end{itemize}

\end{egornote}

\end{comment}

\section{Proofs for the case of weak perturbations}
\label{sec:proofs_weak}

%\begin{comment}

Denote by $U_n$ the Chebyshev polynomial of the second kind of degree $n$.
It is well known that
\begin{equation}\label{eq:Chebyshev_trig_hyp}
U_n(\cos(x))=\frac{\sin((n+1)x}{\sin(x)},\quad
U_n(\cosh(x))=\frac{\sinh((n+1)x)}{\sinh(x)}.
\end{equation}

\begin{proposition}
\label{prop:char_pol_via_Cheb}
For every $\al,\la$ in $\bC$
and $n\ge 3$,
\begin{equation}\label{eq:char_pol_via_Cheb}
\det(\la I_n - A_{\al,n})
= U_n\left(\frac{\la-2}{2}\right)
-|\al|^2 U_{n-2}\left(\frac{\la-2}{2}\right)
-2(-1)^n\Re(\al).
\end{equation}
\end{proposition}

\begin{proof}
It is well known and easy to prove (using expansion by cofactors and induction) that $U_n(x)$ is the determinant of the tridiagonal Toeplitz matrix of the order $n$
with the entries $1,2x,1$.
In our notation, this means that
\begin{equation}\label{eq:char_pol_via_Cheb_alpha_zero}
\det(\la I_n-A_{0,n})
=U_n\left(\frac{\la-2}{2}\right).
\end{equation}
In order to prove~\eqref{eq:char_pol_via_Cheb},
we use expansion by cofactors and~\eqref{eq:char_pol_via_Cheb_alpha_zero}.
\end{proof}

\begin{proposition}
\label{prop:0_and_4_are_not_eigenvalues_weak}
Let $|\al|<1$ and $n\geq 3$.
Then $0$ and $4$ are not eigenvalues of $A_{\al,n}$.
\end{proposition}

\begin{proof}
Apply~\eqref{eq:char_pol_via_Cheb} with $\la=0$ and $\la=4$:
\begin{align*}
\det(-A_{\al,n})
&=(-1)^n\left(n(1-|\al|^2)+|1-\al|^2\right),
\\
\det(4 I_n - A_{\al,n})
&= n(1-|\al|^2)+|1-(-1)^n\al|^2.
\end{align*}
These expressions are obviously nonzero.
\end{proof}

If $\la\in(0,4)$, then we use the trigonometric change of variables $\la=g(x)$ in the characteristic polynomial.

\begin{proposition}
\label{prop:char_pol_trig}
Let $\al\in\bC$, $\al\notin\{-1,1\}$, $n\ge 3$, $x\in(0,\pi)$. Then
\begin{equation}
\label{eq:char_pol_trig_kl}
\det(g(x)I_n-A_{\al,n})
=\frac{(-1)^{n+1} |\al+1|^2
\left(\tan^2\frac{nx}{2}
-2k_\al\cot(x)\tan\frac{nx}{2}
-l_\al^2\right)}%
{1+\tan^2\frac{nx}{2}}.
\end{equation}
\end{proposition}

\begin{proof}
We start with~\eqref{eq:char_pol_via_Cheb},
write $\la$ as $2-2\cos(x)$,
use the parity or imparity of $U_n$
and the trigonometric formula~\eqref{eq:Chebyshev_trig_hyp}
for $U_n$:
\[
U_n(-\cos(x))
=(-1)^n U_n(\cos(x))
=(-1)^n\frac{\sin((n+1)x)}{\sin(x)}.
\]
Then
\begin{equation}\label{eq:char_pol_trig0}
\det(g(x)I_n-A_{\al,n})
=\frac{(-1)^n}{\sin(x)}
\left(\sin((n+1)x)
-|\al|^2\sin((n-1)x)
-2\Re(\al)\right).
\end{equation}
Applying the trigonometric identities
\[
\sin((n\pm 1)x)
=\sin(nx)\cos(x)\pm\cos(nx)\sin(x),
\]
\[
\sin(nx)
=\frac{2\tan\frac{nx}{2}}%
{1+\tan^2\frac{nx}{2}},\qquad
\cos(nx)
=\frac{1-\tan^2\frac{nx}{2}}%
{1+\tan^2\frac{nx}{2}},
\]
and regrouping the summands, we get
\begin{equation}\label{eq:char_pol_trig}
\begin{aligned}
\det(g(x) I_n&-A_{\al,n})
= \frac{(-1)^{n+1}}{1+\tan^2\frac{nx}{2}}\times
\\
&\times
\left(|\al+1|^2\tan^2{\frac{nx}{2}
-2(1-|\al|^2)\cot(x)\tan\frac{nx}{2}
-|\al-1|^2}\right),
\end{aligned}
\end{equation}
which is equivalent to~\eqref{eq:char_pol_trig_kl}.
\end{proof}

Notice that~\eqref{eq:char_pol_trig0},
up to a nonzero factor,
is a particular case of the expression
that appears in~\cite[eq. (3.10)]{YuehCheng2008}.

\begin{proposition}\label{prop:points_of_the_uniform_mesh_are_not_eigenvalues}
Let $\al\in\bC$, $\al\notin\{-1,1\}$, $n\ge 3$.
Then the points $g(j\pi/n)$, $1\le j\le n-1$,
are not eigenvalues of $A_{\al,n}$.
\end{proposition}

\begin{proof}
By~\eqref{eq:char_pol_trig_kl},
$\det(g(x)I_n-A_{\al,n})\ne0$
for $x$ of the form $j\pi/n$.
\end{proof}

For each $n\ge 3$ and $1\le j\le n$,
denote by $I_{n,j}$ the open interval $\left(\frac{(j-1)\pi}{n},\frac{j\pi}{n}\right)$.

For $|\al|<1$, $n\ge 3$, and $j$ in $\bZ$,
define $u_{\al,j}\colon(0,\pi)\to\bR$ by
\begin{equation}\label{eq:u_def}
u_{\al,j}(x)
\eqdef k_\al\cot(x)
+(-1)^{j+1}\sqrt{k_\al^2\cot^2(x)+l_\al^2}.
\end{equation}

\begin{proposition}\label{prop:weak_equation_tan}
Let $|\al|<1$, $n\ge3$, and $1\le j\le n$.
Then the equation
\begin{equation}\label{eq:weak_equation_tan}
\tan\frac{nx}{2} = u_{\al,j}(x)
\end{equation}
has a unique solution in~$I_{n,j}$,
and the corresponding value $g(x)$ is an eigenvalue of $A_{\al,n}$.
\end{proposition}

\begin{proof}
For $x$ in $I_{n,j}$,
the expression $\frac{(-1)^{n+1}|\al+1|^2}{1+\tan^2\frac{nx}{2}}$ takes finite nonzero values.
Omitting this factor, we consider the right-hand side of~\eqref{eq:char_pol_trig_kl} as a quadratic polynomial in $\tan\frac{nx}{2}$, with coefficients depending on $\al$ and $x$.
The roots of this quadratic polynomial are $u_{\al,1}$ and $u_{\al,2}$.
So, for $x$ in $I_{n,j}$,
the characteristic equation $\det(g(x)I_n-A_{\al,n})=0$
is equivalent to the union of the equations
\[
\tan\frac{nx}{2}=u_{\al,1}(x),\qquad
\tan\frac{nx}{2}=u_{\al,2}(x).
\]
Since $k_\al>0$, we get $u_{\al,1}(x)>0$ and $u_{\al,2}(x)<0$.
Furthermore, the first derivative of $u_{\al,1}$ and $u_{\al,2}$ is negative: 
\[
u_{\al,j}'(x)
=-\frac{k_\al}{\sin^2(x)}
\left(1+\frac{(-1)^{j+1}k_\al\cot{(x)}}{\sqrt{k_\al^2\cot^2{(x)} +l_\al^2}}\right)
<0,
\]
and the functions
$u_{\al,1}$ and $u_{\al,2}$
are strictly decreasing on $(0,\pi)$.
Their limit values are
\[
u_{\al,1}(0^+)=+\infty,\quad
u_{\al,1}(\pi^-)=0,\qquad
u_{\al,2}(0^+)=0,\quad
u_{\al,2}(\pi^-)=-\infty.
\]
For every $j$, the function
$x\mapsto\tan\frac{nx}{2}-u_{\al,j}(x)$ strictly increases on $I_{n,j}$
and changes its sign.
By the intermediate value theorem,
it has a unique zero on $I_{n,j}$.
Figure~\ref{fig:tg_u_weak} illustrates the ideas of this proof.
\end{proof}

\begin{figure}
\centering
\includegraphics{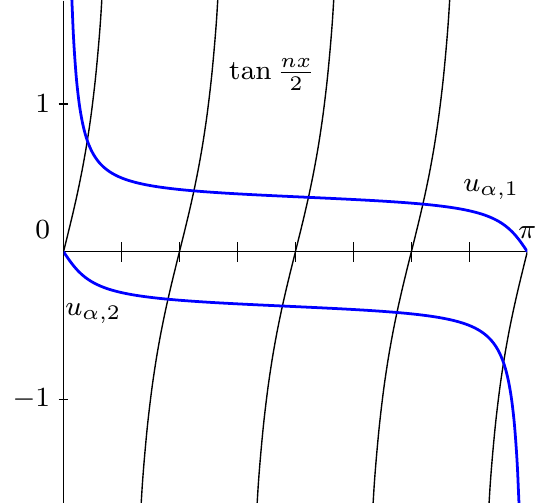}
\caption{Left-hand sides (black)
and right-hand sides (blue)
of equations~\eqref{eq:weak_equation_tan} for $\al=0.7+0.6i$, $n=8$, $1\le j\le n$.
Notice that $[0,\pi]$ is divided into $8$ equal subintervals, and each subinterval corresponds to its proper equation.
\label{fig:tg_u_weak}
}
\end{figure}

Proposition~\ref{prop:weak_equation_tan} implies Theorems~\ref{thm:weak_localization} and~\ref{thm:weak_equation} for $|\al|<1$.

Recall that $\eta$ is defined by~\eqref{eq:eta}.
A straightforward computation yields
\begin{equation}\label{eq:eta_der} 
\eta_{\al,j}'(x)
= -\frac{2k_\al
\left(1+\frac{(-1)^{j+1}k_\al\cot(x)}{\sqrt{k_\al^2\cot^2(x)+l_\al^2}}\right)
\left(1+\cot^2(x)\right)}%
{1+\left((-1)^{j+1}k_\al\cot(x)+\sqrt{k_\al^2\cot^2(x)+l_\al^2}\right)^2}.
\end{equation}

\begin{proposition}\label{prop:derivatives_eta}
Let $\al\in\bC$, $|\al|\ne 1$, $j\in\bZ$.
Then each derivative of $\eta_{\al,j}$ is a bounded function on $(0,\pi)$.
In particular, for every $x$ in $(0,\pi)$,
\begin{equation}\label{eq:der_eta_bound}
|\eta_{\al,j}'(x)|
\leq \frac{4(|\al|+1)}{||\al|-1|}.
\end{equation}
\end{proposition}

\begin{proof}
By~\eqref{eq:eta_der},
$\eta_{\al,j}'$ is analytic in a neighborhood of $x$,
for any $x$ in $(0,\pi)$.
Moreover, for $0<x<\pi/2$,
\[
\eta_{\al,j}'(x)
= -\frac{2k_\al
\left(1+\frac{(-1)^{j+1}k_\al}{\sqrt{k_\al^2+l_\al^2\tan^2(x)}}\right)
\left(1+\tan^2(x)\right)}%
{\tan^2(x)+\left((-1)^{j+1}k_\al+\sqrt{k_\al^2+l_\al^2\tan^2(x)}\right)^2},
\]
and for $\pi/2<x<\pi$, $\eta_{\al,j}'(x)$ has a similar expression, with $(-1)^j$ instead of $(-1)^{j+1}$.
Hence, $\eta_{\al,j}'$ has an analytic extension to a certain open set in the complex plane containing the segment $[0,\pi]$. Therefore, this function and all their derivatives are bounded on $(0,\pi)$.

An explicit upper bound for $|\eta_{\al,j}'|$
follows directly from~\eqref{eq:eta_der}.
Let us denote $\cot(x)$ by $t$ and explain briefly
how to ``supress'' the unbounded factor $1+t^2$
appearing in the numerator of~\eqref{eq:eta_der}.
If $(-1)^{k+1}k_\al t\ge 0$, then the denominator of~\eqref{eq:eta_der}
is sufficiently large:
\[
1+
\left((-1)^{j+1}k_\al t
+\sqrt{k_\al^2 t^2+l_\al^2}\right)^2
\ge k_\al^2 t^2+l_\al^2
\ge k_\al^2 t^2+k_\al^2
= k_\al^2 (t^2+1).
\]
If $(-1)^k k_\al t\ge 0$,
then a factor in the numerator of~\eqref{eq:eta_der}
is sufficiently small:
\begin{align*}
&1+\frac{(-1)^{j+1}k_\al t}{\sqrt{k_\al^2 t^2+1}}
=\frac{l_\al^2}{\left(\sqrt{k_\al^2 t^2+l_\al^2}+(-1)^j k_\al t\right)
\sqrt{k_\al^2 t^2+l_\al^2}}
\le \frac{l_\al^2}{k_\al^2 (t^2+1)}.
\end{align*}
In both cases, we easily obtain~\eqref{eq:der_eta_bound}.
\end{proof}

Let $f_{\al,n,j}$ be the function
defined on $[0,\pi]$ by the right-hand side of~\eqref{eq:weak_equation}:
\begin{equation}\label{eq:f}
f_{\al,n,j}(x)
\eqdef\frac{j\pi+\eta_{\al,j}(x)}{n}.
\end{equation}

\begin{proposition}\label{prop:weak_fixed_point}
Let $|\al|<1$, $N_1(\al)$ be defined by~\eqref{eq:N1}, $n>N_1(\al)$, and $1\le j\le n$.
Then $f_{\al,n,j}$ is contractive in $[0,\pi]$, and its fixed point belongs to $I_{n,j}$.
\end{proposition}

\begin{proof}
Since the function $\eta_{\al,j}$ takes values in $[-\pi,0]$ and its derivative is bounded by~\eqref{eq:der_eta_bound},
it is easy to see that
$f_{\al,n,j}(x)\in[0,\pi]$
for every $x$ in $[0,\pi]$,
and
\[
|f'_{\al,n,j}(x)|\le\frac{N_1(\al)}{n}<1.
\]
Moreover, the assumption $|\al|<1$ implies that
$\eta_{\al,j}(0)=0$
and $\eta_{\al,j}(\pi)=-\pi$.
Therefore
\[
f_{\al,n,j}(0)>0,\qquad f_{\al,n,j}(\pi)<\pi.
\]
So, if $x$ is the fixed point of $f_{\al,n,j}$, then $0<x<\pi$.
Thus, $-\pi<\eta_{\al,j}(x)<0$ and
\[
x=f_{\al,n,j}(x)
=\frac{j\pi+\eta_{\al,j}(x)}{n}
\in\left(\frac{(j-1)\pi}{n},\frac{j\pi}{n}\right)
=I_{n,j}.
\]
In particular, this implies that the fixed point of $f_{\al,n,j}$ coincides with $\tht_{\al,n,j}$.
\end{proof}

\begin{proposition}
\label{prop:weak_theta_asympt}
Let $\al\in\bC$, $|\al|<1$.
Then there exists $C_3(\al)>0$ such that
for $n$ large enough and $1\le j\le n$,
\begin{equation}\label{eq:weak_theta_asympt}
\tht_{\al,n,j}
=\frac{j\pi}{n}
+\frac{\eta_{\al,j}\left(\frac{j\pi}{n}\right)}{n}
+\frac{\eta_{\al,j}\left(\frac{j\pi}{n}\right)\eta_{\al,j}'\left(\frac{j\pi}{n}\right)}{n^2}
+r_{\al,n,j},
\end{equation}
where $|r_{\al,n,j}|\le\frac{C_3(\al)}{n^3}$.
\end{proposition}

\begin{proof}
Theorem~\ref{thm:weak_localization} assures the initial approximation
$\tht_{\al,n,j}=j\pi/n+O(1/n)$.
Substitute it into the right-hand side
of~\eqref{eq:weak_equation} and expand $\eta_{\al,j}$ by Taylor's formula around $j\pi/n$:
\[
\tht_{\al,n,j}
=\frac{j\pi+\eta_{\al,j}\left(\frac{j\pi}{n}+O\left(\frac{1}{n}\right)\right)}{n}
=\frac{j\pi}{n}+\frac{\eta_{\al,j}\left(\frac{j\pi}{n}\right)}{n}
+O_\al\left(\frac{1}{n^2}\right).
\]
Iterate once again in~\eqref{eq:weak_equation}:
\[
\tht_{\al,n,j}
=\frac{j\pi+\eta_{\al,j}\left(\frac{j\pi}{n}+\frac{\eta_{\al,j}\left(\frac{j\pi}{n}\right)}{n}
+O_\al\left(\frac{1}{n^2}\right)\right)}{n}.
\]
Expanding $\eta_{\al,j}$ around $j\pi/n$
with two exact term
and estimating the residue term with Proposition~\ref{prop:derivatives_eta}
we obtain the desired result.
\end{proof}

Theorem~\ref{thm:weak_asympt} follows from Proposition~\ref{prop:weak_theta_asympt}:
we just evaluate $g$ at the expression~\eqref{eq:weak_theta_asympt}
and expand it by Taylor's formula around $j\pi/n$.
See~\cite{BBGM2018} for a more general scheme.

The next proposition can be seen as a particular case of~\cite{YuehCheng2008}, therefore we do not include the proof.

\begin{proposition}[the eigenvectors for weak perturbations]
\label{prop:weak_eigenvectors}
Let $\al\in\bC$, $|\al|<1$,
$n\geq3$, $1\le j\le n$.
Then the vector
$v_{\al,n,j}=\bigl[v_{\al,n,j,k}\bigr]_{k=1}^n$
with components
\begin{equation}\label{eq:weak_eigvec_components}
v_{\al,n,j,k}
\eqdef \sin(k\tht_{\al,n,j})
+ \overline{\al}\sin((n-k)\tht_{\al,n,j})
\end{equation}
is an eigenvector of the matrix $A_{\al,n}$ associated to the eigenvalue $\la_{\al,n,j}$.
\end{proposition}

\section{\texorpdfstring{Case $\boldsymbol{|\al|=1}$}{Case |alpha|=1}}
\label{sec:abs_alpha_eq_one}

For $|\al|=1$, the eigenvalues of $A_{\al,n}$ can be computed explicitly.

\begin{proposition}\label{prop:case_abs_alpha_eq_1}
Let $\al\in\bC$, $|\al|=1$, $\al\ne\pm1$, $n\ge 3$, and $1\le j\le n$.
Then $\la_{\al,n,j}=g(\tht_{\al,n,j})$, where
\begin{equation}
\label{eq:tht_alpha_abs_1}
\tht_{\al,n,j}
= \frac{j\pi}{n} -\frac{2}{n}\arctan\left(l_\al^{(-1)^j}\right).
\end{equation}
Furthermore, $\tht_{\al,n,j}\in I_{n,j}$,
and the vector with components~\eqref{eq:weak_eigvec_components} is an eigenvector of $A_{\al,n}$ associated to $\la_{\al,n,j}$.
\end{proposition}
 
\begin{proof}
The condition about $\al$ implies that $k_\al=0$. In this case, the functions $\eta_{\al,1}$ and $\eta_{\al,2}$ are just constants:
\[
\eta_{\al,j}(x)
=-2\arctan\left(l_\al^{(-1)^j}\right),
\]
and the characteristic equation~\eqref{eq:weak_equation}
simplifies to the direct formula~\eqref{eq:tht_alpha_abs_1}.
\end{proof}

Proposition~\ref{prop:case_abs_alpha_eq_1} implies Theorems~\ref{thm:weak_localization}, \ref{thm:weak_equation}, and~\ref{thm:weak_asympt}, in the case $|\al|=1$ and $\al\ne\pm 1$.

For $\al=\pm 1$, the situation is different: the definition of $\eta_{\al,j}$ does not make sense, each number $\tht_{\al,n,j}$ coincides with one of the extremes of $I_{n,j}$,
and most of the eigenvalues are double.

\begin{proposition}\label{prop:case_alpha_pos_1}
Let $\al=1$, $n\ge 3$, and $1\le j\le n$.
Then $\la_{1,n,j}=g(\tht_{1,n,j})$, where
\begin{equation}\label{eq:tht_sol_alpha_1}
\tht_{1,n,j}
= \left(j-\frac{1-(-1)^j}{2}\right)\frac{\pi}{n}
=
\begin{cases}
\displaystyle\medstrut
\frac{2q\pi}{n}, & j=2q,
\\[1ex]
\displaystyle\medstrut
\frac{2q\pi}{n}, & j=2q+1.
\end{cases}
\end{equation}
The vector $v_{1,n,j}=[v_{1,n,j,k}]_{k=1}^n$
with components
\begin{equation}\label{eq:eigenvector_alpha_1}
v_{1,n,j,k}\eqdef
\sin\left(k\tht_{1,n,j}+\frac{(1-(-1)^j)\pi}{4}\right)
=
\begin{cases}
\displaystyle\medstrut
\sin\frac{2kq\pi}{n}, & j=2q,
\\[1ex]
\displaystyle\medstrut
\cos\frac{2kq\pi}{n}, & j=2q+1,
\end{cases}
\end{equation}
is an eigenvector of $A_{1,n}$ associated to $\la_{1,n,j}$.
\end{proposition}

\begin{proof}
The numbers $\tht_{1,n,j}$
can be found by passing to the limit $\al\to 1^-$
in~\eqref{eq:weak_equation}.
The equalities
$A_{1,n}v_{1,n,j}=\la_{1,n,j}v_{1,n,j}$
are easy to verify directly.
\end{proof}

The formulas for the eigenvalues of $A_{1,n}$ also follow from the theory of circulant matrices, since the matrix $A_{1,n}$ is circulant.
Notice that 
\[
\la_{1,n,1}
<\la_{1,n,2}=\la_{1,n,3}
<\la_{1,n,4}=\la_{1,n,5}
<\ldots,
\]
i.e. each of the eigenvalues $\la_{1,n,2}$, $\la_{1,n,4}$, etc. is double and has two linearly independent eigenvectors.

\begin{proposition}\label{prop:case_alpha_neg_1}
Let $\al=-1$, $n\ge 3$, and $1\le j\le n$.
Then $\la_{-1,n,j}=g(\tht_{-1,n,j})$, where
\begin{equation}\label{eq:tht_sol_alpha_minus_1}
\tht_{-1,n,j}
= \left(j-\frac{1+(-1)^j}{2}\right)\frac{\pi}{n}
=
\begin{cases}
\displaystyle\medstrut
\frac{(2q-1)\pi}{n}, & j=2q-1,
\\[1ex]
\displaystyle\medstrut
\frac{(2q-1)\pi}{n}, & j=2q.
\end{cases}
\end{equation}
The vector $v_{-1,n,j}=[v_{-1,n,j,k}]_{k=1}^n$
with components
\begin{equation}\label{eq:eigenvector_alpha_minus_1}
v_{-1,n,j,k}\eqdef
\sin\left(k\tht_{-1,n,j}+\frac{(1+(-1)^j)\pi}{4}\right)
=
\begin{cases}
\displaystyle\medstrut
\sin\frac{k(2q-1)\pi}{n}, & j=2q-1,
\\[1ex]
\displaystyle\medstrut
\cos\frac{k(2q-1)\pi}{n}, & j=2q,
\end{cases}
\end{equation}
is an eigenvector of $A_{-1,n}$ associated to $\la_{-1,n,j}$.
\end{proposition}

\begin{proof}
Similar to the proof of Proposition~\ref{prop:case_alpha_pos_1}.
\end{proof}

By Proposition~\ref{prop:case_alpha_neg_1},
\[
\la_{-1,n,1}=\la_{-1,n,2}
<\la_{-1,n,3}=\la_{-1,n,4}
<\ldots,
\]
i.e. each of the eigenvalues
$\la_{-1,n,1}$, $\la_{-1,n,3}$, etc.
is double and has two linearly independent eigenvectors.

\section{Proofs for the case of strong perturbations}
\label{sec:proofs_strong}

Let us show that for $|\al|>1$, $n$ large enough and $2\le j\le n-1$, the situation is nearly the same as in Proposition~\ref{prop:weak_fixed_point}.
Recall that $N_1(\al)$ and $f_{\al,n,j}$ are defined by~\eqref{eq:N1} and~\eqref{eq:f}.

\begin{proposition}\label{prop:strong_equation_tan_and_fixed_point}
Let $|\al|>1$, $n>N_1(\al)$,
and $2\le j\le n-1$.
Then $f_{\al,n,j}$ is contractive on $[0,\pi]$.
If $x$ is the fixed point of $f_{\al,n,j}$,
then $x\in I_{n,j}$, and $g(x)$ is an eigenvalue of $A_{\al,n}$.
\end{proposition}

\begin{proof}
Inequality~\eqref{eq:der_eta_bound} and the assumption $n\ge N_1(\al)$ imply that $f_{\al,n,j}$ is contractive on $[0,\pi]$.
Unlike in the case of weak perturbations,
in the case $|\al|>1$ we have
\begin{equation}\label{eq:eta_extreme_values_for_alpha_greater_1}
\eta_{\al,j}(0)=-\pi,\qquad
\eta_{\al,j}(\pi)=0.
\end{equation}
Now the condition $2\le j\le n-1$
assures that $0$ and $\pi$ are not fixed points of $f_{\al,n,j}$:
\[
f_{\al,n,j}(0)=\frac{j\pi-\pi}{n}>0,\qquad
f_{\al,n,j}(\pi)=\frac{j\pi+0}{n}<\pi.
\]
Let $x$ be the fixed point of $f_{\al,n,j}$.
Then $0<x<\pi$.
Hence $-\pi<\eta_{\al,j}(x)<0$
and $x=f_{\al,n,j}(x)\in I_{n,j}$.
\end{proof}

\begin{proposition}
\label{prop:0_and_4_are_not_eigenvalues_strong}
Let $|\al|>1$ and $n>N_1(\al)$.
Then $0$ and $4$ are not eigenvalues of $A_{\al,n}$.
\end{proposition}

\begin{proof}
Formulas from the proof of Proposition~\ref{prop:0_and_4_are_not_eigenvalues_weak} and the assumption $n>N_1(\al)$ easily imply that
$\det(\la I_n-A_{\al,n})\ne 0$
for $\la\in\{0,4\}$.
\end{proof}

If $|\al|>1$ and $n\ge N_1(\al)$,
then the functions $f_{\al,n,1}$ and $f_{\al,n,n}$ are contractive,
but their fixed points are $0$ and $\pi$.
The corresponding values of $g$, i.e. the points $0$ and $4$, are not eigenvalues of $A_{\al,n}$.
Hence, for $|\al|>1$ and $n$ large enough,
the trigonometric change of variables $\la=g(x)$ with real $x$
allows us to find only $n-2$ eigenvalues.
In order to find the eigenvalues outside of $[0,4]$, we will use the change of variables $\la=g_-(x)$ or $\la=g_+(x)$,
defined by~\eqref{eq:g_plus_minus}.

\begin{comment}
\begin{egornote}
Para $2\le j\le n-1$,
tambi\'{e}n podr\'{i}amos analizar la ecuaci\'{o}n caracter\'{i}stica en forma $\tan\frac{nx}{2}=u_{\al,j}(x)$,
pero no estoy seguro si esto nos da mucho m\'{a}s informaci\'{o}n.
\end{egornote}

\begin{figure}[hb]
\label{fig:tg_ustrong}
\centering
\begin{tikzpicture}
\def\xscale{1.5}
\def\yscale{1.5}
\def\mypi{3.1416}
\draw (0, 0) -- (\mypi*\xscale, 0);
\draw (0, -3*\yscale) -- (0, 0);
\draw (0, 3*\yscale) -- (0, 0);
\node at (0, 0) [above left] {$\scriptstyle 0$};
\node at (\mypi*\xscale, 0)
  [above] {$\scriptstyle \pi$};
\node at (0, -3*\yscale) [left] {$\scriptstyle -3$};
\node at (0, 3*\yscale) [left] {$\scriptstyle 3$};
\draw plot[smooth] file {tg8.txt};
\node at (0.3*\mypi,\mypi*0.9) {$\scriptstyle \tan\frac{nx}{2}$};
\draw plot[smooth] file {ustrongi.txt}; % j impar
\node at (1.3*\mypi,\mypi*0.9) [right] {$\scriptstyle u_{\al,1}$};
\draw plot[smooth] file {ustrongp.txt}; % j par
\node at (0.22*\mypi,-\mypi*0.9) [left] {$\scriptstyle u_{\al,2}$};
\end{tikzpicture}
\caption{Left-hand side and right-hand side of equation~\eqref{eq:weak_equation_tan} for $\al=2+i$ and $n=8$.}
\end{figure}
\end{comment}

\begin{proposition}\label{prop:chareq_strong_hyper}
For $x>0$ and $\la=g_-(x)$,
the equation $\det(\la I_n-A_{\al,n})=0$ is equivalent to
\begin{equation}\label{eq:chareq_tanh_first}
\tanh(x)
=\frac{2(|\al|^2-1)\tanh\frac{nx}{2}}%
{|\al+1|^2\tanh^2\frac{nx}{2}
+|\al-1|^2}.
\end{equation}
For $x>0$ and $\la=g_+(x)$,
the equation $\det(\la I_n-A_{\al,n})=0$ is equivalent to
\begin{equation}\label{eq:chareq_tanh_last}
\tanh(x)
=\frac{2(|\al|^2-1)\tanh\frac{nx}{2}}%
{|\al+(-1)^n|^2\tanh^2\frac{nx}{2}
+|\al-(-1)^n|^2}.
\end{equation}
\end{proposition}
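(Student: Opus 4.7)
The plan is to repeat the derivation of Proposition~\ref{prop:char_pol_trig} with hyperbolic substitutions instead of trigonometric ones, starting from the Chebyshev identity~\eqref{eq:char_pol_via_Cheb}. Both identities in~\eqref{eq:Chebyshev_trig_hyp} are available, so the computation is mechanical; the only point that needs care is the $(-1)^n$ factor, which is what distinguishes the two cases $\la=g_-(x)$ and $\la=g_+(x)$.

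First consider $\la=g_-(x)=-4\sinh^2(x/2)$. Then $(\la-2)/2=-\cosh(x)$, and the parity $U_n(-y)=(-1)^n U_n(y)$ together with the hyperbolic formula in~\eqref{eq:Chebyshev_trig_hyp} gives
\[
U_n\!\left(\tfrac{\la-2}{2}\right)=(-1)^n\frac{\sinh((n+1)x)}{\sinh(x)},\qquad
U_{n-2}\!\left(\tfrac{\la-2}{2}\right)=(-1)^n\frac{\sinh((n-1)x)}{\sinh(x)}.
\]
Substituting into~\eqref{eq:char_pol_via_Cheb} and factoring out $(-1)^n/\sinh(x)$ (which is nonzero for $x>0$), the characteristic equation becomes
\[
\sinh((n+1)x)-|\al|^2\sinh((n-1)x)-2\Re(\al)\sinh(x)=0.
\]
This is the hyperbolic analogue of~\eqref{eq:char_pol_trig0}. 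The rest mirrors the trigonometric proof: expand $\sinh((n\pm 1)x)=\sinh(nx)\cosh(x)\pm\cosh(nx)\sinh(x)$ and substitute the half-angle expressions
\[
\sinh(nx)=\frac{2\tanh\frac{nx}{2}}{1-\tanh^2\frac{nx}{2}},\qquad
\cosh(nx)=\frac{1+\tanh^2\frac{nx}{2}}{1-\tanh^2\frac{nx}{2}}.
\]
After clearing the common denominator $1-\tanh^2\frac{nx}{2}$ and regrouping powers of $t\eqdef\tanh\frac{nx}{2}$, the equation collapses to
\[
|\al+1|^2\sinh(x)\,t^{2}+2(1-|\al|^2)\cosh(x)\,t+|\al-1|^2\sinh(x)=0,
\]
which is equivalent to~\eqref{eq:chareq_tanh_first} after dividing by $\cosh(x)$ and isolating $\tanh(x)$.

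For $\la=g_+(x)=4+4\sinh^2(x/2)$ we have $(\la-2)/2=+\cosh(x)$, so the parity factor $(-1)^n$ no longer appears in the Chebyshev terms. Consequently the free term coming from $-2(-1)^n\Re(\al)$ in~\eqref{eq:char_pol_via_Cheb} is \emph{not} absorbed by $(-1)^n$, and the analogue of the intermediate step reads
\[
\sinh((n+1)x)-|\al|^2\sinh((n-1)x)-2(-1)^n\Re(\al)\sinh(x)=0.
\]
The same half-angle manipulation then yields
\[
|\al+(-1)^n|^2\sinh(x)\,t^{2}+2(1-|\al|^2)\cosh(x)\,t+|\al-(-1)^n|^2\sinh(x)=0,
\]
since $1+|\al|^2\pm 2(-1)^n\Re(\al)=|\al\pm(-1)^n|^2$. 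This is exactly~\eqref{eq:chareq_tanh_last}. There is no real obstacle beyond bookkeeping: the only subtle point is keeping track of the sign $(-1)^n$, which is precisely what produces the different constants $|\al\pm 1|^2$ versus $|\al\pm(-1)^n|^2$ in the two equations.
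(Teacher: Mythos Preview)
Your proof is correct and follows the same route as the paper: apply~\eqref{eq:char_pol_via_Cheb} with the hyperbolic form of~\eqref{eq:Chebyshev_trig_hyp}, expand $\sinh((n\pm1)x)$, pass to half-angle expressions in $\tanh\frac{nx}{2}$, and regroup using $1+|\al|^2\pm 2\Re(\al)=|\al\pm1|^2$. The paper compresses all of this into ``after some simple transformations'' and records only the resulting expression~\eqref{eq:charpol_strong_first}; you spell out the intermediate steps and explain clearly why the $(-1)^n$ in the free term of~\eqref{eq:char_pol_via_Cheb} survives in the $g_+$ case but is absorbed in the $g_-$ case, which is exactly the point the paper leaves implicit.
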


\begin{proof}
For $\la=g_-(x)=2-2\cosh(x)$,
we apply~\eqref{eq:char_pol_via_Cheb}
and~\eqref{eq:Chebyshev_trig_hyp}.
After some simple transformations,
\begin{equation}\label{eq:charpol_strong_first}
\begin{aligned}
\det(g_-(x)I_n-A_{\al,n})
&=\frac{(-1)^n}{1-\tanh^2\frac{nx}{2}}
\biggl(|\al+1|^2\tanh^2\frac{nx}{2}
\\
&\quad-2(|\al|^2-1)\tanh\frac{nx}{2}\,\coth(x)
+|\al-1|^2\biggr).
\end{aligned}
\end{equation}
This expression for the characteristic polynomial yields~\eqref{eq:chareq_tanh_first}.
The proof of~\eqref{eq:chareq_tanh_last} is analogous.
\end{proof}

\begin{remark}\label{rem:rapid_and_slow}
In formula~\eqref{eq:char_pol_trig},
$\tan\frac{nx}{2}$ is
rapidly oscillating
and $\cot(x)$ is much slower,
therefore we solve~\eqref{eq:char_pol_trig} for $\tan\frac{nx}{2}$.
The situation in~\eqref{eq:charpol_strong_first}
is different:
if $x$ is separated from zero and $n$ is large enough,
then $\tanh\frac{nx}{2}$ is almost a constant,
and we prefer to solve~\eqref{eq:charpol_strong_first} for $\tanh(x)$.
\end{remark}

In what follows, we restrict ourselves to the analysis of the equation~\eqref{eq:chareq_tanh_first},
because~\eqref{eq:chareq_tanh_last} is similar.
Define $\psi_\al\colon[0,1]\to[0,+\infty)$ by
\begin{equation}\label{eq:psi}
\psi_\al(t)\eqdef\frac{2(|\al|^2-1)t}%
{|\al+1|^2 t^2+|\al-1|^2}.
\end{equation}
Notice that
\begin{equation}\label{eq:psi_al_1}
\psi_\al(1)
=\frac{2(|\al|^2-1)}{|\al+1|^2+|\al-1|^2}
=\frac{|\al|^2-1}{|\al|^2+1}
=\tanh(\log|\al|).
\end{equation}
We are going to construct explicitly a left neighborhood of $1$ where the values of $\psi_\al$ are close enough to $\tanh(\log|\al|)$.

\begin{lemma}\label{lem:psi}
Let $|\al|>1$. Then for every $t$ with
\begin{equation}\label{eq:t_assumption}
1-\frac{|\al|-1}{(|\al|+1)^3} \le t\le 1,
\end{equation}
the following inequalities hold:
\begin{equation}\label{eq:Dpsi_bound}
|\psi_\al'(t)|\le 1,
\end{equation}
\begin{equation}\label{eq:psi_bound}
\tanh\frac{\log|\al|}{2}
\le\psi_\al(t)
\le\tanh\frac{3\log|\al|}{2},
\end{equation}
\begin{equation}\label{eq:one_minus_psi_square_bound}
1 - \psi_\al^2(t) \ge\frac{2}{(|\al|+1)^3}.
\end{equation}
\end{lemma}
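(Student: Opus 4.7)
The plan is to reduce everything to elementary estimates in $a\eqdef|\al|$, exploiting the identity $\psi_\al(1)=\tanh(\log a)$ already recorded in~\eqref{eq:psi_al_1}. Writing $p\eqdef|\al+1|^2$ and $q\eqdef|\al-1|^2$, we have $p+q=2(a^2+1)$ and $\max(p,q)\le(a+1)^2$. Set $\delta\eqdef\frac{a-1}{(a+1)^3}$, so the hypothesis on $t$ reads $t\in[1-\delta,1]$. I would prove \eqref{eq:Dpsi_bound} first, deduce \eqref{eq:psi_bound} from it via the mean value theorem, and derive \eqref{eq:one_minus_psi_square_bound} from \eqref{eq:psi_bound}.

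For \eqref{eq:Dpsi_bound}, I would differentiate to get
\[
\psi_\al'(t)=\frac{2(a^2-1)(q-pt^2)}{(pt^2+q)^2},
\]
then bound the numerator crudely by $|q-pt^2|\le p+q=2(a^2+1)$. For the denominator, I would use $(1-\delta)^2\ge 1-2\delta$ to obtain $pt^2+q\ge 2(a^2+1)-2p\delta$, and note $p\delta\le\frac{a-1}{a+1}<1$, which yields $pt^2+q\ge 2a^2$. Dividing, $|\psi_\al'(t)|\le 1-a^{-4}<1$.

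For \eqref{eq:psi_bound}, the bound $|\psi_\al'|\le 1$ together with $\psi_\al(1)=\tanh(\log a)$ and the mean value theorem give $|\psi_\al(t)-\tanh(\log a)|\le\delta$ throughout the interval. So the claim reduces to
\[
\tanh(\log a)-\tanh\tfrac{\log a}{2}\ge\delta
\quad\text{and}\quad
\tanh\tfrac{3\log a}{2}-\tanh(\log a)\ge\delta.
\]
Both differences admit closed-form expressions, namely $\frac{2a(a-1)}{(a+1)(a^2+1)}$ and $\frac{2a^2(a-1)}{(a^2+1)(a^3+1)}$ respectively; after cancelling the common factor $a-1$, each reduces to a polynomial inequality that holds for all $a\ge 1$. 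Finally, for \eqref{eq:one_minus_psi_square_bound}, the upper estimate $\psi_\al(t)\le\tanh(\frac{3\log a}{2})=\frac{a^3-1}{a^3+1}$ just obtained yields $1-\psi_\al(t)^2\ge\frac{4a^3}{(a^3+1)^2}$; using $a^3+1=(a+1)(a^2-a+1)$, the desired bound $\frac{2}{(a+1)^3}$ reduces to $2a^3(a+1)\ge(a^2-a+1)^2$, again elementary for $a\ge 1$.

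The only mildly subtle point is the simultaneous control of numerator and denominator in the derivative estimate: the precise scaling $\delta\sim(a+1)^{-3}$ in the hypothesis is chosen exactly so that $p\delta$ stays bounded by $1$, while the two increments $\delta$ on either side of $\tanh(\log a)$ remain within the tails separating $\tanh(\log a)$ from $\tanh(\frac{\log a}{2})$ and $\tanh(\frac{3\log a}{2})$. Once that scaling is recognised, the remainder of the proof is routine polynomial verification.
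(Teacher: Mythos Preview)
Your proof is correct and follows essentially the same route as the paper: bound $|\psi_\al'|$ by estimating numerator and denominator separately, then use the mean value theorem together with $\psi_\al(1)=\tanh(\log|\al|)$ to obtain~\eqref{eq:psi_bound}, and finally derive~\eqref{eq:one_minus_psi_square_bound} from the upper half of~\eqref{eq:psi_bound}. The only differences are cosmetic: the paper bounds $|q-pt^2|\le q+pt^2$ (rather than your cruder $p+q$) and obtains the slightly sharper $pt^2+q\ge\frac{2(|\al|^3+|\al|^2+2)}{|\al|+1}$, but both versions yield $|\psi_\al'|\le 1$ and the rest is identical.
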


\begin{proof}
The proof is quite elementary, therefore we will only mention the main steps.
Assumption~\eqref{eq:t_assumption} implies that
\[
1-t^2\le \frac{2(|\al|-1)}{(|\al|+1)^3},\qquad
|\al+1|^2 t^2+|\al-1|^2
\ge\frac{2(|\al|^3+|\al|^2+2)}{|\al|+1}.
\]
With these estimates we obtain~\eqref{eq:Dpsi_bound}.
After that, the mean value theorem and~\eqref{eq:psi_al_1} provide~\eqref{eq:psi_bound}.
Inequality~\eqref{eq:one_minus_psi_square_bound} follows from~\eqref{eq:psi_bound}.
\end{proof}

Recall that $N_2(\al)$ and $S_\al$
are given by~\eqref{eq:N2}
and~\eqref{eq:segment}.
Define $h_{\al,n}$ on $S_\al$
as the right-hand side of~\eqref{eq:eq_first}:
\[
h_{\al,n}(x)
\eqdef\arctanh\frac{2(|\al|^2-1)\tanh\frac{nx}{2}}%
{|\al+1|^2\tanh^2\frac{nx}{2}+|\al-1|^2}.
\]

\begin{proposition}\label{prop:h_contractive}
Let $\al\in\bC$, $|\al|>1$, and $n>N_2(\al)$.
Then $h_{\al,n}$ is contractive on $S_\al$,
and its fixed point is the solution of~\eqref{eq:chareq_tanh_first}.
\end{proposition}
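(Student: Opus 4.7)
The plan is to rewrite $h_{\al,n}(x)=\arctanh\bigl(\psi_\al(\tanh(nx/2))\bigr)$ with $\psi_\al$ as in~\eqref{eq:psi}, and to control its derivative by combining Lemma~\ref{lem:psi} with the exponential closeness of $\tanh(nx/2)$ to $1$ on $S_\al$. By the chain rule,
\[
h'_{\al,n}(x)=\frac{n}{2}\cdot\frac{\psi_\al'(\tanh(nx/2))\bigl(1-\tanh^2(nx/2)\bigr)}{1-\psi_\al^2(\tanh(nx/2))}.
\]
The first step is to verify that for $x\in S_\al$ and $n>N_2(\al)$ the value $t=\tanh(nx/2)$ lies in the interval~\eqref{eq:t_assumption} where Lemma~\ref{lem:psi} applies. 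Using $1-\tanh(y)\le 2e^{-2y}$ together with $nx/2\ge n\log|\al|/4$, this reduces to a logarithmic inequality in $n$ that is precisely what the explicit shape of $N_2(\al)$ in~\eqref{eq:N2} is designed to enforce.

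Once the lemma is in force, I would use $|\psi_\al'(t)|\le 1$ from~\eqref{eq:Dpsi_bound}, the lower bound $1-\psi_\al^2(t)\ge 2/(|\al|+1)^3$ from~\eqref{eq:one_minus_psi_square_bound}, and the exponential smallness $1-\tanh^2(nx/2)\le 4|\al|^{-n/2}$ to obtain
\[
|h'_{\al,n}(x)|\le \frac{n(|\al|+1)^3}{|\al|^{n/2}}.
\]
A direct substitution at $n=N_2(\al)$ collapses this upper bound to the expression $\bigl(20\log(|\al|+1)-4\log\log|\al|\bigr)(\log|\al|)/(|\al|+1)^{7}$, which is much smaller than $1$ uniformly in $|\al|>1$; and the function $n\mapsto n|\al|^{-n/2}$ is decreasing beyond this threshold, so the bound only improves for larger $n$. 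Hence $h_{\al,n}$ is a strict contraction on $S_\al$.

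To apply the Banach fixed point theorem it remains to establish invariance $h_{\al,n}(S_\al)\subseteq S_\al$, which is an immediate consequence of~\eqref{eq:psi_bound}: the values $\psi_\al(t)$ lie in $[\tanh(\log|\al|/2),\tanh(3\log|\al|/2)]$, and applying $\arctanh$ returns exactly $S_\al$. Since $h_{\al,n}$ is by construction the right-hand side of~\eqref{eq:chareq_tanh_first}, its unique fixed point in $S_\al$ is a solution of that equation. The main technical obstacle is the constant bookkeeping: the term $-4\log\log|\al|$ inside $N_2(\al)$ is what prevents the estimates from degenerating as $|\al|\to 1^{+}$, and one must verify that the single threshold $N_2(\al)$ simultaneously guarantees applicability of Lemma~\ref{lem:psi}, invariance of $S_\al$, and a strict contraction bound.
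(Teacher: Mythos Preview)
Your proposal is correct and follows essentially the same approach as the paper: represent $h_{\al,n}$ as $\arctanh\circ\psi_\al\circ\tanh(n\,\cdot/2)$, verify via the exponential estimate that $\tanh(nx/2)$ lies in the range~\eqref{eq:t_assumption} so that Lemma~\ref{lem:psi} applies, use~\eqref{eq:psi_bound} for invariance of $S_\al$, and bound $|h'_{\al,n}|$ by combining~\eqref{eq:Dpsi_bound} and~\eqref{eq:one_minus_psi_square_bound} with the smallness of $1-\tanh^2(nx/2)$. The only cosmetic difference is in the last estimate: instead of evaluating at $n=N_2(\al)$ and invoking monotonicity of $n\mapsto n|\al|^{-n/2}$, the paper splits $e^{-n\log|\al|/2}=e^{-n\log|\al|/4}\cdot e^{-n\log|\al|/4}$, applies $ue^{-u}\le 1/e$ to one factor and the definition of $N_2(\al)$ to the other, arriving at the cleaner uniform bound $|h'_{\al,n}(x)|\le 4/(|\al|+1)^2<1$.
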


\begin{proof}
We represent $h_{\al,n}$
as the following composition:
\[
h_{\al,n}(x)
=\arctanh\left(\psi_\al\left(\tanh\frac{nx}{2}\right)\right).
\]
For $x$ in $S_\al$,
denote $\tanh\frac{nx}{2}$ by $t$.
Then
\[
1-t\le 2e^{-nx}
\le 2e^{-N_2(\al)\frac{\log|\al|}{2}}
<\frac{|\al|-1}{(|\al|+1)^3}.
\]
Therefore, by~\eqref{eq:one_minus_psi_square_bound} we have $\psi_\al(\tanh\frac{nx}{2})<1$,
and the definition of $h_{\al,n}$ makes sense.
By~\eqref{eq:psi_bound},
$h_{\al,n}$ takes values in $S_\al$.
Estimate from above the derivative of $h_{\al,n}$ using~\eqref{eq:one_minus_psi_square_bound}, \eqref{eq:Dpsi_bound},
and the elementary inequality $u e^{-u}\le 1/e$:
\begin{align*}
|h_{\al,n}'(x)|
&\le\frac{|\psi_\al'(t)|}{1-\psi_\al^2(t)}\cdot
\frac{n}{2\cosh^2\frac{nx}{2}}
\le (|\al|+1)^3 ne^{-nx}
\\
&\le (|\al|+1)^3 ne^{-\frac{n\log|\al|}{2}}
=(|\al|+1)^3 n e^{-\frac{n\log|\al|}{4}}
e^{-\frac{n\log|\al|}{4}}
\\
&\le (|\al|+1)^3 \cdot \frac{4}{\log|\al|}
\cdot 
 \frac{\log|\al|}{(|\al|+1)^5}
=\frac{4}{(|\al|+1)^2}<1.
\end{align*}
Obviously, the fixed point of $h_{\al,n}$ is the solution of~\eqref{eq:eq_first} and \eqref{eq:chareq_tanh_first}.
\end{proof}

At the moment, we have proven~Theorems~\ref{thm:strong_localization} and \ref{thm:strong_equation}.
Asymptotic formulas for $\la_{\al,n,j}$ with $|\al|>1$ and $2\le j\le n-1$ can be justified in the same manner as for $|\al|<1$, and we are left to prove the exponential convergence~\eqref{eq:asympt_strong_left} and~\eqref{eq:asympt_strong_right}.

\begin{proposition}
\label{prop:eq:diff_thtleft_logal}
Let $|\al|>1$ and
$C_4(\al)\eqdef |\al|^3 e^{\frac{|\al|^3}{\log|\al|}}$.
Then for all $n\ge N_2(\al)$
\begin{equation}\label{eq:diff_thtleft_logal}
\bigl|\tht_{\al,n,1}-\log|\al|\bigr|
\le \frac{C_4(\al)}{|\al|^n}.
\end{equation}
\end{proposition}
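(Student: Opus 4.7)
My plan is to exploit the fixed-point characterization $\tht_{\al,n,1}=h_{\al,n}(\tht_{\al,n,1})$ from Proposition~\ref{prop:h_contractive} and compare it with the limiting identity $\log|\al|=\arctanh(\psi_\al(1))$, which comes from \eqref{eq:psi_al_1}. Subtracting these two gives
\[
\tht_{\al,n,1} - \log|\al|
= \arctanh\bigl(\psi_\al(\tanh(n\tht_{\al,n,1}/2))\bigr) - \arctanh(\psi_\al(1)),
\]
and the task is to estimate the right-hand side.

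Using $|\psi_\al'|\le 1$ from Lemma~\ref{lem:psi} together with the elementary inequality $1-\tanh(u)\le 2e^{-2u}$, one obtains
\[
|\psi_\al(\tanh(n\tht_{\al,n,1}/2))-\psi_\al(1)|\le 2e^{-n\tht_{\al,n,1}}.
\]
To apply Lemma~\ref{lem:psi} I first have to check that $\tanh(n\tht_{\al,n,1}/2)$ satisfies \eqref{eq:t_assumption}, which follows from $\tht_{\al,n,1}\ge \log|\al|/2$ (since $\tht_{\al,n,1}\in S_\al$) combined with the definition of $N_2(\al)$ in \eqref{eq:N2}. To transport the estimate through $\arctanh$, I then bound its derivative $1/(1-y^2)=\cosh^2(\arctanh y)$: by the range part of \eqref{eq:psi_bound}, both arguments of $\arctanh$ lie in $[\tanh(\log|\al|/2),\tanh(3\log|\al|/2)]$, so the derivative is at most $\cosh^2(3\log|\al|/2)\le |\al|^3$ (for $|\al|>1$). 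The mean value theorem then yields
\[
|\tht_{\al,n,1}-\log|\al||\le 2|\al|^3\, e^{-n\tht_{\al,n,1}}.
\]

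The remaining step, which I expect to be the main technical obstacle, is to upgrade $e^{-n\tht_{\al,n,1}}$ from $|\al|^{-n/2}$ (which follows immediately from $\tht_{\al,n,1}\ge \log|\al|/2$) to $|\al|^{-n}$ with a manageable prefactor. I plan to do this by a single bootstrap: the crude bound yields $\tht_{\al,n,1}\ge \log|\al|-2|\al|^{3-n/2}$, hence
\[
e^{-n\tht_{\al,n,1}}\le |\al|^{-n}\exp\bigl(2n|\al|^{3-n/2}\bigr).
\]
It then remains to show that for $n\ge N_2(\al)$ the factor $2\exp(2n|\al|^{3-n/2})$ is bounded by $e^{|\al|^3/\log|\al|}$, which gives the claimed $C_4(\al)$. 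The key observation is that $n\mapsto 2n|\al|^{3-n/2}$ attains its global maximum at $n=2/\log|\al|$, which lies well below $N_2(\al)$ for all $|\al|>1$; on $[N_2(\al),\infty)$ the function is therefore decreasing, and substituting the explicit expression for $N_2$ into $2N_2(\al)|\al|^{3-N_2(\al)/2}$ gives a quantity uniformly bounded by a small constant. Combined with the elementary lower bound $|\al|^3/\log|\al|\ge 3e$ (attained at $|\al|=e^{1/3}$), this reduces the final step to a routine numerical verification.
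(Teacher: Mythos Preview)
Your proposal is correct and follows essentially the same route as the paper: bound $|\psi_\al(\tanh\frac{nx}{2})-\psi_\al(1)|$ via the mean value theorem and Lemma~\ref{lem:psi}, transfer through $\tanh$/$\arctanh$ (the paper lower-bounds the derivative of $\tanh$ on $S_\al$, you equivalently upper-bound the derivative of $\arctanh$), and then bootstrap the exponent from $|\al|^{-n/2}$ to $|\al|^{-n}$.

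The one place where the paper is slicker is the bootstrap. After obtaining $|x-\log|\al||\le |\al|^3 e^{-nx}$ (the paper's constant is $|\al|^3$ rather than your $2|\al|^3$, owing to a slightly sharper $\cosh^2$ bound), the paper does not plug in $n=N_2(\al)$ at all: it simply writes
\[
n e^{-n\log|\al|/2}
=\frac{2}{\log|\al|}\cdot\frac{n\log|\al|}{2}\,e^{-n\log|\al|/2}
\le \frac{2}{e\log|\al|}
< \frac{1}{\log|\al|},
\]
using $ue^{-u}\le 1/e$ uniformly in $n$. This gives $nx\ge n\log|\al|-|\al|^3/\log|\al|$ directly and yields the stated $C_4(\al)$ without any case analysis or numerical verification involving the explicit form of $N_2(\al)$. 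Your argument reaches the same conclusion, but the detour through evaluating $2n|\al|^{3-n/2}$ at $n=N_2(\al)$ and comparing with the minimum $3e$ of $|\al|^3/\log|\al|$ is avoidable.
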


\begin{proof}
For brevity, put $x=\tht_{\al,n,1}$.
Apply the mean value theorem to $\psi_\al$, taking into account~\eqref{eq:Dpsi_bound}:
\[
\bigl|\tanh(x)-\tanh(\log|\al|)\bigr|
=\left|\psi_\al\left(\tan\frac{nx}{2}\right)
-\psi_\al(1)\right|
\le 1-\tan\frac{nx}{2}
\le 2e^{-nx}.
\]
On the other hand,
apply the mean value theorem to $\tanh$:
\[
|\tanh(x)-\tanh(\log|\al|)|
\ge \frac{\bigl|x-\log|\al|\bigr|}{\cosh^2\frac{3\log|\al|}{2}}
\ge \frac{2}{|\al|^3} \bigl|x-\log|\al|\bigr|.
\]
From this chain of inequalities,
\begin{equation}\label{eq:x_minus_limit}
|x-\log|\al||\le |\al|^3 e^{-nx}.
\end{equation}
We already know from Proposition~\ref{prop:h_contractive} that~$x\ge\frac{\log|\al|}{2}$.
Thus,
\[
x \ge \log|\al| - |\al|^3 e^{-\frac{n\log|\al|}{2}}.
\]
Using the elementary inequality $u e^{-u}\le 1/e$ we get
\begin{equation}\label{eq:nx_ge}
nx
\ge 
n\log|\al| - |\al|^3\,ne^{-\frac{n\log|\al|}{2}}
\ge 
n\log|\al|-\frac{|\al|^3}{\log|\al|}.
\end{equation}
By~\eqref{eq:x_minus_limit} and~\eqref{eq:nx_ge},
inequality \eqref{eq:diff_thtleft_logal} holds.
\end{proof}

In a similar manner,
$|\tht_{\al,n,n}-\log|\al||\le C_4(\al)/|\al|^n$.
Since the derivatives of $g_-$ and $g_+$
are bounded on $\left[0,\frac{3}{2}\log|\al|\right]$,
we get limit relations~\eqref{eq:asympt_strong_left}
and~\eqref{eq:asympt_strong_right}.
Thereby we have proven the parts of Theorem~\ref{thm:strong_asympt} related to the extreme eigenvalues $\la_{\al,n,1}$ and $\la_{\al,n,n}$.

\begin{proposition}[the eigenvectors for strong perturbations]
\label{prop:strong_eigenvectors}
Let $\al\in\bC$, $|\al|>1$, $n\ge N(\al)$.
Then the vectors
$v_{\al,n,1}\eqdef[v_{\al,n,1,k}]_{k=1}^n$
and
$v_{\al,n,n}\eqdef[v_{\al,n,n,k}]_{k=1}^n$
with components
\begin{align}
\label{eq:strong_eigvec_components_1}
v_{\al,1,n,k}
&= \sinh(k\tht_{\al,n,1}) + \overline{\al}\sinh((n-k)\tht_{\al,n,1}),
\\
\label{eq:strong_eigvec_components_n}
v_{\al,n,n,k}
&= (-1)^k\sinh(k\tht_{\al,n,n}))
+ (-1)^{k+n}\,\overline{\al}
\sinh((n-k)\tht_{\al,n,n}),
\end{align}
are the eigenvectors of the matrix $A_{\al,n}$ associated to the eigenvalues $\la_{\al,1,n}$ and $\la_{\al,n,n}$, respectively.
For $2\le j\le n-1$,
the vector $v_{\al,n,j}$
defined by \eqref{eq:weak_eigvec_components}
is an eigenvector of $A_{\al,n}$
associated to the eigenvalue $\la_{\al,n,j}$.
\end{proposition}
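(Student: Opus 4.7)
The plan is to verify directly that $A_{\al,n} v = \la v$ for each of the three cases, splitting each verification into the ``interior'' rows $2,\ldots,n-1$ and the two ``boundary'' rows $1$ and $n$. The interior rows will follow automatically from the fact that $\sinh(k\tht)$, $\sinh((n-k)\tht)$, $\sin(k\tht)$, and $\sin((n-k)\tht)$ (as functions of the index~$k$) all satisfy the same linear recurrence of order two, while the boundary rows will reduce, after cancellations, to the characteristic equation already verified for~$\tht_{\al,n,1}$, $\tht_{\al,n,n}$, and $\tht_{\al,n,j}$ respectively.

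For $v_{\al,n,1}$, with $\la=g_-(\tht_{\al,n,1})=2-2\cosh(\tht_{\al,n,1})$, the interior equations $-v_{k-1}+2v_k-v_{k+1}=\la v_k$ become $v_{k+1}-2\cosh(\tht)v_k+v_{k-1}=0$, and both $\sinh(k\tht)$ and $\sinh((n-k)\tht)$ satisfy this recurrence, so the whole linear combination defining $v_{\al,n,1,k}$ does too. Rows~$1$ and $n$ give, after using $\sinh(0)=0$ and the identity $\sinh(2\tht)=2\cosh(\tht)\sinh(\tht)$, the single scalar condition
\[
\sinh((n+1)\tht)-|\al|^2\sinh((n-1)\tht)-2\Re(\al)\sinh(\tht)=0,
\]
which is exactly the characteristic polynomial~\eqref{eq:char_pol_via_Cheb} evaluated at $\la=2-2\cosh(\tht)$, rewritten through~\eqref{eq:Chebyshev_trig_hyp} and the parity $U_n(-y)=(-1)^n U_n(y)$. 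By Proposition~\ref{prop:chareq_strong_hyper}, this holds at $\tht_{\al,n,1}$.

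For $v_{\al,n,n}$, I would use the factorization $v_{\al,n,n,k}=(-1)^k w_k$ with $w_k\eqdef\sinh(k\tht)+(-1)^n\overline{\al}\sinh((n-k)\tht)$. Substituting into $-v_{k-1}+2v_k-v_{k+1}=\la v_k$ with $\la=g_+(\tht)=2+2\cosh(\tht)$, the alternating signs convert the ``$+$'' sign on the left-hand side of the would-be recurrence into a ``$-$'' sign, so again $w_{k+1}-2\cosh(\tht)w_k+w_{k-1}=0$, which is satisfied. The boundary rows produce
\[
\sinh((n+1)\tht)-|\al|^2\sinh((n-1)\tht)-2(-1)^n\Re(\al)\sinh(\tht)=0,
\]
which now corresponds to~\eqref{eq:char_pol_via_Cheb} at $\la=2+2\cosh(\tht)$ (where $U_n$ is evaluated at $+\cosh\tht$, so no sign flip occurs), and this holds by Proposition~\ref{prop:chareq_strong_hyper} applied to~\eqref{eq:eq_last}. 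For $2\le j\le n-1$, the proof is identical to that of Proposition~\ref{prop:weak_eigenvectors} with the same trigonometric formula, using~\eqref{eq:char_pol_trig0} to identify the boundary condition with the characteristic equation at $\la=g(\tht_{\al,n,j})$.

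The main obstacle is bookkeeping: the sign $(-1)^n$ appearing in $v_{\al,n,n,k}$ must match exactly with the parity flip produced by $U_n(-y)=(-1)^n U_n(y)$ in the two different hyperbolic substitutions, and one has to track carefully that the ``$+$''~factor in $g_+$ flips the interior recurrence in precisely the way cancelled by the $(-1)^k$ prefactor. Once the three boundary conditions are written out, no genuinely new content beyond Proposition~\ref{prop:chareq_strong_hyper} and Proposition~\ref{prop:char_pol_via_Cheb} is needed, so the remainder is a routine calculation.
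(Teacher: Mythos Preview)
Your direct-verification approach is correct. One small sharpening: with the specific coefficients $1$ and $\overline{\al}$ (respectively $(-1)^n\overline{\al}$) in the formulas, row~$1$ is in fact automatically satisfied, because extending the formula to $k=0$ yields exactly $v_0=\overline{\al}\,v_n$ (respectively $w_0=(-1)^n\overline{\al}\,w_n$); it is row~$n$ alone that produces the scalar condition you wrote down, and that condition is indeed the characteristic equation in the appropriate hyperbolic form. You should also add a sentence checking that the vectors are nonzero, e.g.\ $v_{\al,n,1,n}=\sinh(n\tht_{\al,n,1})\ne 0$ and $v_{\al,n,n,n}=(-1)^n\sinh(n\tht_{\al,n,n})\ne 0$; for $2\le j\le n-1$, $v_{\al,n,j,n}=\sin(n\tht_{\al,n,j})\ne 0$ since $n\tht_{\al,n,j}\in((j-1)\pi,j\pi)$.

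As for the comparison: the paper does not actually prove this proposition. It states the formulas without proof, pointing out that they are a particular case of the results of Yueh and Cheng~(2008), who treat general four-corner perturbations via difference equations. Your argument is therefore more self-contained than what the paper provides; it trades the external citation for a short sign-tracking computation, which you have outlined correctly.
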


\begin{remark}
It is possible to show that for a fixed $\al$ with $|\al|>1$ and large values of $n$, the norms of the vectors $v_{\al,n,1}$ and $v_{\al,n,n}$,
given by~\eqref{eq:strong_eigvec_components_1} and~\eqref{eq:strong_eigvec_components_n}, grow as $|\al|^n$.
In order to avoid large numbers,
we recommend to divide each component of these vectors by $|\al|^n$.
\end{remark}

\section{Numerical experiments}
\label{sec:numerical}

We use the following notation for different approximations of the eigenvalues.
\begin{itemize}
\item $\la_{\al,n,j}^{\text{gen}}$ are the eigenvalues computed in Sagemath by general algorithms, with double-precision arithmetic.
\item $\la_{\al,n,j}^{\text{fp}}$ are the eigenvalues computed by formulas of Theorems~\ref{thm:weak_equation} and~\ref{thm:strong_equation},
i.e. solving the equations~\eqref{eq:weak_equation},
~\eqref{eq:eq_first}, and \eqref{eq:eq_last}
by the fixed point iteration;
these computations are performed
in the high-precision  arithmetic
with $3322$ binary digits
($\approx 1000$ decimal digits).
Using~$\la_{\al,n,j}^{\text{fp}}$
we compute $v_{\al,n,j}$ by~\eqref{eq:weak_eigvec_components},
\eqref{eq:strong_eigvec_components_1},
and~\eqref{eq:strong_eigvec_components_n}.

\item $\la_{\al,n,j}^{\text{asympt}}$
are the approximations given by~\eqref{eq:laasympt} and~\eqref{eq:laasympt_extreme}.
\end{itemize}

In~\eqref{eq:eq_first} and~\eqref{eq:eq_last},
we compute $\tanh\frac{nx}{2}$
as $1-2e^{-nx}/(1+e^{-nx})$,
because $nx/2$ can be large
and the standard formula for $\tanh$ can produce overflows (``NaN'').

We have constructed a large series of examples with random values of $\al$ and $n$.
In all these examples, we have obtained
\[
\max_{1\le j\le n}|\la_{\al,n,j}^{\text{gen}}-\la_{\al,n,j}^{\text{fp}}|< 2\cdot 10^{-13},\qquad
\frac{\|A_{\al,n}v_{\al,n,j}
-\la_{\al,n,j}^{\text{fp}}v_{\al,n,j}\|}{\|v_{\al,n,j}\|}
<10^{-996}.
\]
This means that the exact formulas from
Theorems~\ref{thm:weak_equation} and~\ref{thm:strong_equation}
are fulfilled up to the rounding errors.
Theorems~\ref{thm:weak_localization}
and~\ref{thm:strong_localization}
can be viewed as simple corollaries from
Theorems~\ref{thm:weak_equation} and~\ref{thm:strong_equation},
so they do not need additional tests.
For Theorems~\ref{thm:weak_asympt} and~\ref{thm:strong_asympt},
we have computed the errors
\[
R_{\al,n,j}\eqdef \la_{\al,n,j}^{\text{asympt}}-\la_{\al,n,j}^{\text{fp}}
\]
and their maximums
$\|R_{\al,n}\|_\infty=\max_{1\le j\le n}|R_{\al,n,j}|$.
Tables~\ref{table:errors_weak} and~\ref{table:errors_strong}
show that these errors indeed can be bounded by $C_1(\al)/n^3$,
and $C_1(\al)$ has to take bigger values when $|\al|$ is close to $1$.

\begin{table}[ht]
\caption{Values of $\|R_{\al,n}\|_\infty$
and $n^3 \|R_{\al,n}\|_\infty$
for some $|\al|<1$.
\label{table:errors_weak}}
\[
\begin{array}{|c|c|c|}
\hline
\multicolumn{3}{ |c| }{\bigstrut\al=-0.3+0.5i,\quad |\al|\approx 0.58}
\\\hline
\bigstrut n & \|R_{\al,n}\|_\infty &%
\hstrut{}n^3 \|R_{\al,n}\|_\infty\hstrut{}%
\\\hline
\medstrut 64 & 1.76\times10^{-4} & 46.05 \\
\medstrut 128 & 2.49\times10^{-5} & 52.13 \\
\medstrut 256 & 3.29\times10^{-6} & 55.12 \\
\medstrut 512 & 4.22\times10^{-7} & 56.58 \\
\medstrut 1024 & 5.34\times10^{-8} & 57.31 \\
\medstrut 2048 & 6.71\times10^{-9} & 57.67 \\
\medstrut\hstrut{}4096\hstrut{}&%
\hstrut{}8.42\times10^{-10}\hstrut{}&%
\hstrut{}57.84\hstrut{}\\
\medstrut\hstrut{}8192\hstrut{}&%
\hstrut{}1.05\times10^{-10}\hstrut{}&%
\hstrut{}57.93\hstrut{}\\
\hline
\end{array}
\qquad
\begin{array}{|c|c|c|}
\hline
\multicolumn{3}{ |c| }{\bigstrut\al=0.7+0.6i,\quad |\al|\approx 0.92}
\\\hline
\bigstrut n & \|R_{\al,n}\|_\infty &%
\hstrut{}n^3 \|R_{\al,n}\|_\infty\hstrut{}%
\\\hline
%\medstrut 16 & 2.83\times10^{-2} & 116.25 \\
%\medstrut 32 & 5.72\times10^{-3} & 187.54 \\
\medstrut 64 & 1.02\times10^{-3} & 266.71 \\
\medstrut 128 & 1.59\times10^{-4} & 333.02 \\
\medstrut 256 & 2.24\times10^{-5} & 376.61 \\
\medstrut 512 & 2.99\times10^{-6} & 401.28 \\
\medstrut 1024 & 3.86\times10^{-7} & 414.29 \\
\medstrut 2048 & 4.90\times10^{-8} & 420.94 \\
\medstrut\hstrut{}4096\hstrut{}&%
\hstrut{}6.17\times10^{-9}\hstrut{}&%
\hstrut{}424.30\hstrut{}\\
\medstrut\hstrut{}8192\hstrut{}&%
\hstrut{}7.75\times 10^{-10}\hstrut{}&%
425.99 \\
\hline
\end{array}
\]
\end{table}

\begin{table}[ht]
\caption{Values of $\|R_{\al,n}\|_\infty$
and $n^3 \|R_{\al,n}\|_\infty$
for some $|\al|>1$.
\label{table:errors_strong}}
\[
\begin{array}{|c|c|c|}
\hline
\multicolumn{3}{|c|}{\bigstrut\al=2+i,\quad |\al|\approx 2.23}
\\\hline
\bigstrut n & \|R_{\al,n}\|_\infty &%
\hstrut{}n^3 \|R_{\al,n}\|_\infty\hstrut{}%
\\\hline
%\medstrut 16 & 3.31\times 10^{-3} & 13.56 \\
%\medstrut 32 & 9.41\times 10^{-4} & 30.85 \\
\medstrut 64 & 1.55\times 10^{-4} & 40.59 \\
\medstrut 128 & 2.15\times 10^{-5} & 45.18 \\
\medstrut 256 & 2.82\times 10^{-6} & 47.33 \\
\medstrut 512 & 3.60\times 10^{-7} & 48.36 \\
\medstrut 1024 & 4.55\times 10^{-8} & 48.86 \\
\medstrut 2048 & 5.72\times 10^{-9} & 49.10 \\
\medstrut\hstrut{}4096\hstrut{}&%
\hstrut{}7.16\times 10^{-10}\hstrut{}&%
\hstrut{}49.22\hstrut{}\\
\medstrut\hstrut{}8192\hstrut{}&%
\hstrut{}8.97\times 10^{-11}\hstrut{}&%
\hstrut{}49.29\hstrut{}%
\\
\hline
\end{array}
\qquad
\begin{array}{|c|c|c|}
\hline
\multicolumn{3}{ |c| }{\bigstrut\al=0.8-0.7i,\quad |\al|\approx 1.06}
\\\hline
\bigstrut n & \|R_{\al,n}\|_\infty &%
\hstrut{}n^3 \|R_{\al,n}\|_\infty\hstrut{}%
\\\hline
%\medstrut 16 & 1.14\times10^{-3} & 4.69 \\
%\medstrut 32 & 4.45 \times10^{-4} & 14.59 \\
\medstrut 64 & 2.19\times10^{-4} & 57.51 \\
\medstrut 128 & 2.19\times10^{-5} & 45.90 \\
\medstrut 256 & 1.40\times10^{-5} & 235.36 \\
\medstrut 512 & 2.99\times10^{-6} & 401.90 \\
\medstrut 1024 & 4.55\times10^{-7} & 488.25 \\
\medstrut 2048 & 6.16\times10^{-8} & 528.84 \\
\medstrut\hstrut{}4096\hstrut{}&%
\hstrut{}7.98\times10^{-9}\hstrut{}&%
\hstrut{}548.04\hstrut{}\\
\medstrut\hstrut{}8192\hstrut{}&%
\hstrut{}1.01\times10^{-9}\hstrut{}&%
\hstrut{}557.32\hstrut{}\\
\hline
\end{array}
\]
\end{table}

We have also tested~\eqref{eq:asympt_strong_left} and~\eqref{eq:asympt_strong_right}.
As $n$ grows,
$|\al|^n |R_{\al,n,1}|$
and $|\al|^n |R_{\al,n,n}|$ approach rapidly the same limit value depending on $\al$. 
For example,
\begin{align*}
&\text{for}\ \al=2+i, &
&\lim_{n\to\infty}\left(|\al|^n |R_{\al,n,1}|\right)\approx 2.86,
\\
&\text{for}\ \al=0.8-0.7i, &
&\lim_{n\to\infty}\left(|\al|^n |R_{\al,n,1}|\right)\approx 1.12\cdot 10^{-2}.
\end{align*}

\begin{comment}
\begin{table}[hb]
\caption{Values of $|R_{\al,n,1}|$
and $|\al|^n |R_{\al,n,1}|$
for some $|\al|>1$.
\label{table:errors_extreme}}
\[
\begin{array}{|c|c|c|}
\hline
\multicolumn{3}{|c|}{\bigstrut\al=2+i,\quad |\al|\approx 2.23}
\\\hline
\bigstrut n & |R_{\al,n,1}| &%
\hstrut{}|\al|^n |R_{\al,n,1}|\hstrut{}%
\\\hline
\medstrut 64 & 1.22\times10^{-22} & 2.86  \\
\medstrut 128 & 5.27\times10^{-45} & 2.86 \\
\medstrut 256 & 9.73\times10^{-90} & 2.86 \\
\medstrut 512 & 3.31\times10^{-179} & 2.86  \\
\medstrut 1024 & 3.83\times10^{-358} & 2.86  \\
\medstrut 2048 & 5.14 \times10^{-716} & 2.86  \\
\medstrut 4096 & 9.24\times10^{-1432} & 2.86  \\
\medstrut\hstrut{}8192\hstrut{}&%
\hstrut{}2.98\times10^{-2863}\hstrut{}&%
2.86 \\
\hline
\end{array}
\qquad\quad
\begin{array}{|c|c|c|}
\hline
\multicolumn{3}{ |c| }{\bigstrut\al=0.8-0.7i,\quad |\al|\approx 1.06}
\\\hline
\bigstrut n & |R_{\al,n,1}| &%
\hstrut|\al|^n|R_{\al,n,1}|\hstrut
\\\hline
\medstrut 64 & 2.05\times10^{-4} & 1.02\times10^{-2}  \\
\medstrut 128 & 4.49\times10^{-6} &1.12\times10^{-2} \\
\medstrut 256 & 1.80\times10^{-9} & 1.12\times10^{-2}  \\
\medstrut 512 & 2.90\times10^{-16} & 1.12\times10^{-2} \\
\medstrut 1024 & 7.50\times10^{-30} & 1.12\times10^{-2} \\
\medstrut 2048 & 5.00\times10^{-57} & 1.12\times10^{-2} \\
\medstrut 4096 & 2.22\times10^{-111} & 1.12\times10^{-2}  \\
\medstrut \hstrut{}8192\hstrut{} & \hstrut{}4.38\times10^{-220}\hstrut{} & \hstrut{}1.12\times10^{-2}\hstrut{} \\
\hline
\end{array}
\]
\end{table}
\end{comment}

\paragraph{Acknowledgements.}
The research has been supported
by CONACYT (Mexico) scholarships
and by IPN-SIP project 20200650
(Instituto Polit\'{e}cnico Nacional, Mexico).
We are grateful to \'{O}scar Garc\'{i}a Hern\'{a}ndez for his participation on the early stages of this research
(we worked with the case $0\le\al<1$).

\bigskip\noindent
Sergei M. Grudsky,\\
\url{grudsky@math.cinvestav.mx},\\
ReseachID J-5263-2017,\\
CINVESTAV, Departamento de Matem\'aticas,\\
Apartado Postal 07360, Ciudad de M\'exico,\\
Mexico.

\bigskip\noindent
Egor A. Maximenko,\\
\url{emaximenko@ipn.mx},\\
\url{http://orcid.org/0000-0002-1497-4338},\\
Instituto Polit\'ecnico Nacional,
Escuela Superior de F\'isica y Matem\'aticas,\\
Apartado Postal 07730, Ciudad de M\'exico,\\
Mexico.

\bigskip\noindent
Alejandro Soto-Gonz\'{a}lez,\\
\url{asoto@math.cinvestav.mx},\\
\url{https://orcid.org/0000-0003-2419-4754},\\
CINVESTAV, Departamento de Matem\'aticas,\\
Apartado Postal 07360, Ciudad de M\'exico,\\
Mexico.

\end{document}